\newtheorem{theorem}{Theorem}[section]
\newtheorem{proposition}[theorem]{Proposition}
\newtheorem{lemma}[theorem]{Lemma}
\newtheorem{corollary}[theorem]{Corollary}
\newtheorem{conjecture}[theorem]{Conjecture}
\newtheorem{remark}[theorem]{Remark}
\newcommand\R{\mathbb{R}}
\newcommand\Z{\mathbb{Z}}
\newcommand\eps{\varepsilon}
\begin{document}
%\begin{frontmatter}
\title[Inverse Kneser inequality]{An inverse theorem for an inequality of Kneser}

\author{Terence Tao}
\address{Department of Mathematics, UCLA\\
405 Hilgard Ave\\
Los Angeles CA 90095\\
USA}
\email{tao@math.ucla.edu}

%\author{Joni Ter\"av\"ainen}
%\address{Department of Mathematics and Statistics, University of Turku\\
%20014 Turku\\
%Finland}
%\email{joni.p.teravainen@utu.fi}

\begin{abstract}	Let $G = (G,+)$ be a compact connected abelian group, and let $\mu_G$ denote its probability Haar measure.  A theorem of Kneser (generalising previous results of Macbeath, Raikov, and Shields) establishes the bound
$$ \mu_G(A + B) \geq \min( \mu_G(A)+\mu_G(B), 1 ) $$
whenever $A,B$ are compact subsets of $G$, and $A+B \coloneqq \{ a+b: a \in A, b \in B \}$ denotes the sumset of $A$ and $B$.  Clearly one has equality when $\mu_G(A)+\mu_G(B) \geq 1$.  Another way in which equality can be obtained is when $A = \phi^{-1}(I), B = \phi^{-1}(J)$ for some continuous surjective homomorphism $\phi: G \to \R/\Z$ and compact arcs $I,J \subset \R/\Z$.  We establish an inverse theorem that asserts, roughly speaking, that when equality in the above bound is almost attained, then $A,B$ are close to one of the above examples.  We also give a more ``robust'' form of this theorem in which the sumset $A+B$ is replaced by the partial sumset $A +_\eps B \coloneqq \{ 1_A * 1_B \geq \eps \}$ for some small $\eps>0$.  In a subsequent paper with Joni Ter\"av\"ainen, we will apply this latter inverse theorem to establish that certain patterns in multiplicative functions occur with positive density.
\end{abstract}

\maketitle
%\end{frontmatter}
%%%%%%%%%%%%%%%%%%%%%%%%%

\section{Introduction}

Throughout this paper, we use $\mu_G$ to denote the Haar probability measure on any compact abelian group $G = (G,+)$; thus for instance $\mu_{\R/\Z}$ is Lebesgue measure on the unit circle $\R/\Z$.
In \cite{kneser}, Kneser established\footnote{In a previous version of this manuscript, this inequality was incorrectly attributed to Kemperman.  We thank John Griesmer for pointing out this error.} the inequality
\begin{equation}\label{kemp}
 \mu_G(A + B) \geq \min( \mu_G(A)+\mu_G(B), 1 ) 
\end{equation}
whenever $A,B$ are non-empty compact subsets of a compact connected abelian group $G$, and $A+B \coloneqq \{ a+b: a \in A, b \in B \}$ denotes the sumset of $A$ and $B$.  A subsequent result of Kemperman \cite{kemperman} extended this inequality to compact connected nonabelian groups also, but we restrict attention here to the abelian case.  Prior to Macbeath's result, the case of a circle $G = \R/\Z$ was obtained by Raikov \cite{raikov}
(and can also be derived by a limiting argument from the Cauchy-Davenport inequality), the case of a torus $G = (\R/\Z)^d$ was obtained by Macbeath \cite{macbeath}, and the case of second countable connected compact groups by Shields \cite{shields}.  The fact that $G$ is connected is crucial, since otherwise $G$ could contain open subgroups of measure strictly between $0$ and $1$, which would of course yield a counterexample to \eqref{kemp}.

In a blog post \cite{blog} of the author, it was observed that one could use an argument of Ruzsa \cite{ruzsa} to obtain the following stronger bound (cf. Pollard's bound \cite{pollard} for cyclic groups):

\begin{theorem}\label{ruzsa-thm}  Let $A,B$ be measurable subsets of a compact connected abelian group $G$.  Then
$$ \int_G \min( 1_A * 1_B, t )\ d\mu_G \geq t \min( \mu_G(A) + \mu_G(B) -t, 1 ) $$
for any $0 \leq t \leq \min(\mu_G(A), \mu_G(B))$, where 
$$ 1_A * 1_B(x) \coloneqq \int_G 1_A(y) 1_B(x-y)\ d\mu_G(y)$$
is the convolution of $1_A$ and $1_B$, and $1_A$ denotes the indicator function of $A$.
\end{theorem}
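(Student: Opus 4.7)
The plan is to reduce Theorem~\ref{ruzsa-thm} to pointwise lower bounds on the superlevel sets of the convolution, then to invoke Kneser's inequality \eqref{kemp} on suitably restricted sets, along the lines suggested in \cite{blog}. Writing $A_s := \{x \in G : 1_A * 1_B(x) \geq s\}$ and applying the layer-cake identity, one has $\int_G \min(1_A * 1_B, t)\, d\mu_G = \int_0^t \mu_G(A_s)\, ds$, so it suffices to control $\mu_G(A_s)$ pointwise in $s$. The proof will rest on two pointwise estimates: (a) $\mu_G(A_s) \geq \mu_G(A) + \mu_G(B) - 2s$ for every $0 < s \leq \min(\mu_G(A), \mu_G(B))$; and (b) $\mu_G(A_s) = 1$ whenever $s \leq \mu_G(A) + \mu_G(B) - 1$.

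Estimate (b) is immediate: since $1_A * 1_B(x) = \mu_G(A \cap (x - B))$, the elementary inclusion--exclusion bound for sets in a probability space gives $1_A * 1_B(x) \geq \mu_G(A) + \mu_G(B) - 1$ pointwise on $G$. A short calculus computation then verifies that combining (a) and (b) and integrating over $s \in (0,t]$ produces exactly the claimed right-hand side $t \min(\mu_G(A) + \mu_G(B) - t, 1)$: one uses (b) on $[0, \mu_G(A) + \mu_G(B) - 1]$ when that interval is nonempty, and (a) on the complement, and the two bounds join cleanly at the threshold $s = \mu_G(A) + \mu_G(B) - 1$ (without (b), the integral of (a) alone would fall short by a quadratic term in $\mu_G(A) + \mu_G(B) - 1$ when the latter is positive).

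The main obstacle is estimate (a). Following the Ruzsa-style argument in \cite{ruzsa}, the plan is to construct compact subsets $A^* \subseteq A$, $B^* \subseteq B$ with $A^* + B^* \subseteq A_s$ and $\mu_G(A \setminus A^*) + \mu_G(B \setminus B^*) \leq 2s$; Kneser's inequality \eqref{kemp} applied to $A^*, B^*$ then gives
\[
\mu_G(A_s) \;\geq\; \mu_G(A^* + B^*) \;\geq\; \min(\mu_G(A^*) + \mu_G(B^*), 1) \;\geq\; \mu_G(A) + \mu_G(B) - 2s,
\]
which is (a). Constructing $A^*, B^*$ amounts to finding a small vertex cover of the bipartite ``bad-pair'' graph $\{(a,b) \in A \times B : a + b \notin A_s\}$, whose total measure is controlled by the double-counting identity
\[
\iint 1_A(a) 1_B(b) 1_{\{a+b \notin A_s\}}\, d\mu_G(a)\, d\mu_G(b) \;=\; \int_{G \setminus A_s} 1_A * 1_B\, d\mu_G \;\leq\; s\bigl(1 - \mu_G(A_s)\bigr).
\]
A K\"onig/Hall-type covering argument in the continuous setting -- combined with the continuity of $1_A * 1_B$ on the compact group $G$, which is needed to upgrade an almost-everywhere inclusion $A^* + B^* \subseteq A_s$ to a genuine one -- then yields $A^*, B^*$ satisfying the required measure bound, finishing the proof.
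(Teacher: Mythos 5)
The reduction to pointwise claims (a) and (b) via the layer-cake formula is fine, and the calculus verifying that (a) and (b) together integrate to the stated right-hand side is correct. Claim (b) also holds. Unfortunately, claim (a) is \emph{false}, and this is a fatal gap.

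Here is a counterexample to (a). Take $G = \R/\Z$ and $A = B = ([0,\gamma]\cup[2\gamma,3\gamma]) \text{ mod }\Z$ with, say, $\gamma = 1/10$. Then $1_A*1_B$ is a sum of three ``tents'': height $\gamma$ on $[0,2\gamma]$, height $2\gamma$ on $[2\gamma,4\gamma]$ (from the two cross terms $[0,\gamma]+[2\gamma,3\gamma]$ and $[2\gamma,3\gamma]+[0,\gamma]$), and height $\gamma$ on $[4\gamma,6\gamma]$. At $s=\gamma\leq\min(\mu_G(A),\mu_G(B))=2\gamma$, the superlevel set $A_\gamma=\{1_A*1_B\geq\gamma\}$ is (up to a null set) the arc $[5\gamma/2,7\gamma/2]$ of measure $\gamma$, whereas (a) would require $\mu_G(A_\gamma)\geq\mu_G(A)+\mu_G(B)-2\gamma=2\gamma$. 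The theorem itself still holds for this pair: for small $s$ the superlevel sets shrink at rate $5$ rather than $2$, so $\mu_G(A_s)=6\gamma-5s$ there, and the resulting surplus in $\int_0^t\mu_G(A_s)\,ds$ compensates the deficit near $s=\gamma$. But the pointwise monotone bound you postulate simply does not hold; the truth here is an integrated statement, not a pointwise one.

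The covering step is likewise unrecoverable. If it worked, in the example above at $s=\gamma$ it would produce nonempty compact $A^*\subseteq A$, $B^*\subseteq B$ with $\mu_G(A^*)+\mu_G(B^*)\geq 2\gamma$ and $A^*+B^*\subseteq A_\gamma$; Kneser's inequality \eqref{kemp} would then give $\mu_G(A_\gamma)\geq 2\gamma$, contradicting $\mu_G(A_\gamma)=\gamma$. More fundamentally, small edge measure of a bipartite measured graph does not yield a small vertex cover: the thin diagonal band $\{(a,b)\in[0,1]^2:|a-b|<\delta\}$ has edge measure $O(\delta)$, yet every measurable vertex cover has total measure at least $1-O(\delta)$. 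So the ``K\"onig/Hall-type covering argument'' you invoke does not exist in this setting. There is also the secondary issue that when $2s$ is close to $\mu_G(A)$ or $\mu_G(B)$ the produced $A^*$ or $B^*$ could be empty, in which case \eqref{kemp} is unavailable.

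For comparison, the paper's proof is of a different nature. Rather than bounding superlevel sets pointwise, it considers the functional $\int_G\min(1_A*1_B,t)\,d\mu_G-t(\mu_G(A)+\mu_G(B)-t)$ as a function of $A$ (with $B$ and $t$ fixed), establishes the submodularity inequality under $A\mapsto A\cap A'$, $A\cup A'$ using the concavity of $x\mapsto\min(x,t)$, checks that the infimum of this functional over $\{\mu_G(A)=a\}$ vanishes at the endpoints $a=t$ and $a=1-\mu_G(B)+t$, and then rules out an interior negative minimum by comparing $A$ with $A\cap(x+A)$ and $A\cup(x+A)$ for a shift $x$ produced by the connectedness / intermediate-value argument of Lemma~\ref{cont}.
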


For the convenience of the reader, we give the proof of this theorem in Section \ref{ruz}.  To see why this result implies \eqref{kemp}, we observe the following corollary of Theorem \ref{ruzsa-thm}.  Given two measurable subsets $A,B$ of $G$ and a parameter $\eps>0$, we define the partial sumset $A +_\eps B$ by the formula
$$ A +_\eps B := \{ x \in G: 1_A * 1_B(x) \geq \eps \}.$$
This is a compact subset of $A+B$.

\begin{corollary}\label{kemp-cor}  Let $G, A, B$ be as in Theorem \ref{ruzsa-thm}.  Then for any $0 < \eps < \min(\mu_G(A),\mu_G(B))^2$, we have
$$ \mu_G(A +_\eps B) \geq \min( \mu_G(A) + \mu_G(B), 1 ) - 2 \sqrt{\eps} $$
\end{corollary}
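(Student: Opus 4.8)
The plan is to apply Theorem \ref{ruzsa-thm} with a carefully chosen value of $t$ and then use a simple Chebyshev/Markov-type argument to convert the lower bound on $\int_G \min(1_A*1_B, t)\,d\mu_G$ into a lower bound on the measure of the superlevel set $\{1_A*1_B \geq \eps\}$. First I would set $t = \sqrt{\eps}$; the hypothesis $0 < \eps < \min(\mu_G(A),\mu_G(B))^2$ guarantees precisely that $0 < t < \min(\mu_G(A),\mu_G(B))$, so Theorem \ref{ruzsa-thm} applies and yields
$$ \int_G \min(1_A * 1_B, \sqrt{\eps})\,d\mu_G \geq \sqrt{\eps}\,\min(\mu_G(A)+\mu_G(B) - \sqrt{\eps}, 1). $$

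Next I would bound the left-hand side from above by splitting the integral according to whether $1_A*1_B$ is below or above $\eps$. On the set $\{1_A*1_B < \eps\}$ the integrand $\min(1_A*1_B,\sqrt\eps)$ is at most $\eps$ (since $\eps < \sqrt\eps$ for $\eps<1$, which holds here), contributing at most $\eps$ to the integral; on the set $A +_\eps \nobreak B = \{1_A*1_B \geq \eps\}$ the integrand is at most $\sqrt{\eps}$, contributing at most $\sqrt{\eps}\,\mu_G(A+_\eps B)$. Combining,
$$ \sqrt{\eps}\,\min(\mu_G(A)+\mu_G(B) - \sqrt{\eps}, 1) \leq \eps + \sqrt{\eps}\,\mu_G(A +_\eps B). $$
Dividing through by $\sqrt{\eps}$ gives $\mu_G(A+_\eps B) \geq \min(\mu_G(A)+\mu_G(B)-\sqrt\eps,1) - \sqrt\eps$, and since $\min(x - \sqrt\eps, 1) \geq \min(x,1) - \sqrt\eps$ for any real $x$, we conclude $\mu_G(A+_\eps B) \geq \min(\mu_G(A)+\mu_G(B),1) - 2\sqrt\eps$, as desired.

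There is essentially no obstacle here: the only points requiring a moment's care are checking that the chosen $t=\sqrt\eps$ lies in the admissible range (which is exactly where the hypothesis on $\eps$ is used), and the elementary inequality $\min(x-\delta,1)\ge\min(x,1)-\delta$ used at the end. The compactness of $A+_\eps B$ noted before the corollary statement ensures $\mu_G(A+_\eps B)$ is well-defined, though measurability alone would suffice for the estimate.
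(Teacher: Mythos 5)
Your proof is correct and is essentially identical to the paper's: both take $t=\sqrt{\eps}$ in Theorem~\ref{ruzsa-thm} and bound the integral $\int_G\min(1_A*1_B,\sqrt{\eps})\,d\mu_G$ above by $\eps+\sqrt{\eps}\,\mu_G(A+_\eps B)$ (your two-region split is just the integrated form of the paper's pointwise inequality $\min(1_A*1_B,\sqrt{\eps})\le\eps+\sqrt{\eps}\,1_{A+_\eps B}$). You also make explicit the final elementary step $\min(x-\sqrt{\eps},1)\ge\min(x,1)-\sqrt{\eps}$, which the paper leaves implicit.
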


One can improve the error term $2\sqrt{\eps}$ slightly, but we will not need to do so here.

\begin{proof}  From the pointwise bound
$$ \min( 1_A * 1_B, \sqrt{\eps} ) \leq \eps + \sqrt{\eps} 1_{A +_\eps B}$$
one has
$$ \int_G \min( 1_A * 1_B, \sqrt{\eps} )\ d\mu_G \leq \eps + \sqrt{\eps}  \mu_G( A +_\eps B)$$
and hence by Theorem \ref{ruzsa-thm}, we have
$$ \mu_G(A +_\eps B)  \geq \min( \mu_G(A) + \mu_G(B) - \sqrt{\eps}, 1 )  - \sqrt{\eps},$$
giving the claim.
\end{proof}

Since the set $A +_\eps B$ is contained in $A+B$, the claim \eqref{kemp} follows from this corollary (in the case $\mu_G(A), \mu_G(B) > 0$) by sending $\eps$ to $0$, noting that \eqref{kemp} is trivial when $\mu_G(A)=0$ or $\mu_G(B)=0$.

There are several cases in which the estimate \eqref{kemp} is sharp.  Firstly, one has the trivial cases in which $A$ or $B$ is a point; there are some further examples of this type where (say) $A$ is a coset of a measure zero subgroup of $G$, and $B$ is a union of cosets of that group.  Secondly, if one has $\mu_G(A) + \mu_G(B) \geq 1$, then the compact sets $A$ and $x-B$ cannot be disjoint (as this would disconnect $G$, since the complement of $A \cup (x-B)$ would be an open null set and hence empty); hence $A+B=G$ and \eqref{kemp} holds with equality.  
Define a \emph{Bohr set} to be a subset of $G$ of the form $\phi^{-1}(I)$, where $\phi: G \to \R/\Z$ is a continuous surjective homomorphism and $I$ is a compact arc in $\R/\Z$ (i.e., a set of the form $I = [a,b] \text{ mod } \Z$ for some $a < b$, where $x \mapsto x \text{ mod } \Z$ is the projection from $\R$ to $\R/\Z$), and say that two Bohr sets $\phi^{-1}(I), \psi^{-1}(J)$ are \emph{parallel} if $\phi=\psi$.  If $A = \phi^{-1}(I)$ and $B = \phi^{-1}(J)$ are two parallel Bohr sets, then $A+B = \phi^{-1}(I+J)$ is also a Bohr set, and (by the uniqueness of Haar measure) the Haar measure of $A,B,A+B$ is equal to the measures of $I,J,I+J$ respectively on the unit circle. One can then easily verify that \eqref{kemp} holds with equality in these cases.

The main result of this paper is an inverse theorem that asserts, roughly speaking, that the above examples are essentially the only situations in which equality can occur.  More precisely, we have

\begin{theorem}[Inverse theorem, first form]\label{inv-1}  Let $\eps>0$, and suppose that $\delta>0$ is sufficiently small depending on $\eps$.  Then, for any compact subsets $A,B$ of a compact connected abelian group $G = (G,+)$ with
$$ \mu_G(A), \mu_G(B), 1 - \mu_G(A) - \mu_G(B) \geq \eps$$
and
$$ \mu_G(A+B) \leq \mu_G(A) + \mu_G(B) + \delta,$$
there exist parallel Bohr sets $\phi^{-1}(I), \phi^{-1}(J)$ such that
$$ \mu_G( A \Delta \phi^{-1}(I) ), \mu_G( B \Delta \phi^{-1}(J) ) \leq \eps,$$
where $A \Delta B$ denotes the symmetric difference of $A$ and $B$.
\end{theorem}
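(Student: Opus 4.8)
The plan is to argue by compactness and contradiction: from a hypothetical family of near-extremisers I will extract a genuine extremiser of \eqref{kemp} living in a single fixed compact connected abelian group, and then analyse the exact equality case. Suppose the theorem fails for some $\eps>0$. Then there are compact connected abelian groups $G_n$ and compact $A_n,B_n\subseteq G_n$ with $\mu_{G_n}(A_n),\mu_{G_n}(B_n),1-\mu_{G_n}(A_n)-\mu_{G_n}(B_n)\ge\eps$ and $\mu_{G_n}(A_n+B_n)\le\mu_{G_n}(A_n)+\mu_{G_n}(B_n)+1/n$, yet $A_n,B_n$ stay at symmetric-difference distance more than $\eps$ from every pair of parallel Bohr sets. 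The first reduction is to a single ambient group. Any finitely many characters of $G_n$ lie in a finitely generated, hence free, subgroup of the torsion-free dual $\widehat{G_n}$, and therefore define a continuous surjective homomorphism from $G_n$ onto a torus; a martingale/approximation argument lets me replace $A_n,B_n$, with arbitrarily small loss in all the relevant Haar measures (and hence, since $\mu_{G_n}(S+K)\downarrow\mu_{G_n}(S)$ as the kernel $K$ shrinks, with small loss in the sumset too), by sets pulled back along such a map; so I may assume $G_n=(\R/\Z)^{d_n}$. Passing to a subsequence, I may assume either that $d_n$ is bounded, in which case I work in one fixed torus, or that $d_n\to\infty$, in which case I realise each $(\R/\Z)^{d_n}$ as the coordinate quotient of the fixed group $(\R/\Z)^\N$ and pull $A_n,B_n$ back to tail-invariant sets $A_n',B_n'\subseteq(\R/\Z)^\N$ with the same measures and with $\mu(A_n'+B_n')=\mu(A_n+B_n)$. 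Under these maps, Bohr sets correspond to Bohr sets with the same measures and symmetric differences.

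Next I take a weak limit. After a further subsequence, $1_{A_n'}\rightharpoonup g_A$ and $1_{B_n'}\rightharpoonup g_B$ weak-$*$ in $L^\infty((\R/\Z)^\N)$, with $g_A,g_B$ taking values in $[0,1]$ and $\int g_A,\int g_B,1-\int g_A-\int g_B\ge\eps$. The key point is to control the sumsets in the limit; here the robust partial sumsets of Corollary \ref{kemp-cor} are the right tool, and one also uses that convolution by a fixed $L^2$ function is a compact operator, so that $1_{A_n'}*1_{B_n'}\to g_A*g_B$ in $L^2$, hence in $L^1$. Since $\{1_{A_n'}*1_{B_n'}\ge\eta\}\subseteq A_n'+B_n'$ has measure at most $\mu(A_n')+\mu(B_n')+1/n$ for every $\eta>0$, passing to the limit gives $\mu(\{g_A*g_B\ge\eta\})\le\int g_A+\int g_B$ for every $\eta>0$, and so $\mu(\mathrm{supp}(g_A*g_B))\le\int g_A+\int g_B$. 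On the other hand $\mathrm{supp}(g_A*g_B)=\overline{\mathrm{supp}(g_A)+\mathrm{supp}(g_B)}$, so applying \eqref{kemp} to the compact sets $C:=\mathrm{supp}(g_A)$, $D:=\mathrm{supp}(g_B)$ gives $\mu(C+D)\ge\mu(C)+\mu(D)\ge\int g_A+\int g_B$. Hence equality holds throughout: $\mu(C)=\int g_A$ and $\mu(D)=\int g_B$, so $g_A=1_C$ and $g_B=1_D$ almost everywhere, and $\mu(C+D)=\mu(C)+\mu(D)<1$. Because $1_{A_n'}\rightharpoonup 1_C$ with $1_C$ an indicator, in fact $\mu(A_n'\,\Delta\,C)=\int(1_{A_n'}-1_C)^2\to0$, and likewise $\mu(B_n'\,\Delta\,D)\to0$.

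It then remains to resolve the exact equality case: compact sets $C,D$ in a compact connected abelian group with $\mu(C),\mu(D)>0$ and $\mu(C+D)=\mu(C)+\mu(D)<1$ must, up to null sets, equal a pair of parallel Bohr sets. I would deduce this from the structural (stabiliser) form of Kneser's theorem. The stabiliser $H:=\{g:(C+D)+g=C+D\}$ is a closed subgroup with $\mu(C+D)\ge\mu(C+H)+\mu(D+H)-\mu(H)$; connectedness forces $H=G$ — impossible here, since then $C+D=G$, contradicting $\mu(C)+\mu(D)<1$ — or $\mu(H)=0$, in which case chasing the inequalities shows $C,D$ are, mod null, full preimages of subsets $C',D'$ of the connected quotient $G/H$, on which $C'+D'$ (and hence $C',D'$) are aperiodic and still satisfy $\mu(C'+D')=\mu(C')+\mu(D')<1$. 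A final quotient argument reduces this aperiodic case to the one-dimensional statement that $\mu(I+J)=\mu(I)+\mu(J)<1$ forces $I,J$ to be arcs of $\R/\Z$ — which follows by cutting the circle at a point outside $I+J$ and invoking the elementary inequality $\mu(X+Y)\ge(\sup X-\inf X)+(\sup Y-\inf Y)\ge\mu(X)+\mu(Y)$ on $\R$ together with its equality case — and this both produces the homomorphism $\phi$ and shows $C,D$ fill out the corresponding parallel Bohr sets. Back in $(\R/\Z)^\N$, these Bohr sets involve only finitely many coordinates, so for large $n$ they descend to $(\R/\Z)^{d_n}$; thus $A_n',B_n'$, and therefore $A_n,B_n$, lie within $o(1)$ of parallel Bohr sets, contradicting the construction. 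Translating this qualitative contradiction back into the quantitative form ``$\delta$ small depending on $\eps$'' is routine.

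The step I expect to be the real obstacle is the passage to the limit, i.e.\ showing that near-extremality in \eqref{kemp} is not destroyed under weak-$*$ limits: one must know that $\mathrm{supp}(g_A*g_B)$ is genuinely controlled by $\int g_A+\int g_B$, which is precisely what the robust partial-sumset formulation of Corollary \ref{kemp-cor} together with the compactness of the convolution operators is for — the ordinary sumset $A+B$ is far too discontinuous for this to be obvious. Once that is secured, the weak limits are forced to be honest indicators and the problem collapses to the exact equality case of Kneser's inequality, whose remaining difficulty is the (classical in spirit, but still substantial) bookkeeping of how equality descends through the stabiliser to a quotient circle and then lifts back to rigidity of $C$ and $D$.
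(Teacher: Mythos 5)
Your strategy is genuinely different from the paper's: you propose a compactness/weak-limit argument that extracts an exact extremiser and then invokes the structural (Satz~2) form of Kneser's theorem, whereas the paper stays finitary, using iterated sumsets, submodularity closure properties of ``critical pairs'', and a Schoen/Green--Ruzsa moment argument to pin down the relevant character explicitly. Unfortunately there is a gap at exactly the step you yourself flag as the hardest: the claim that $1_{A_n'}*1_{B_n'}\to g_A*g_B$ in $L^2$. The appeal to ``convolution by a fixed $L^2$ function is a compact operator'' only gives $(1_{A_n'}-g_A)*g_B\to 0$ and $g_A*(1_{B_n'}-g_B)\to 0$; the cross term $(1_{A_n'}-g_A)*1_{B_n'}$ involves two sequences that both vary, and compactness of a single fixed convolution operator says nothing about it. In Fourier terms, the Plancherel mass of $(1_{A_n'}-g_A)*1_{B_n'}$ is $\sum_\phi|\widehat{1_{A_n'}}(\phi)-\widehat{g_A}(\phi)|^2\,|\widehat{1_{B_n'}}(\phi)|^2$, and this need not be small because $|\widehat{1_{B_n'}}(\phi)|$ can stay bounded away from zero on the very frequencies where $\widehat{1_{A_n'}}$ fails to localize.

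Concretely: in $G=(\R/\Z)^\N$, take characters $\phi_n$ escaping to infinity (e.g.\ $\phi_n$ the $n$-th coordinate projection) and let $A_n=B_n=\phi_n^{-1}([0,\tfrac14]\bmod\Z)$. Then $\mu(A_n)=\mu(B_n)=\tfrac14$ and $\mu(A_n+B_n)=\tfrac12$, so these are exact Kneser extremisers, yet $1_{A_n}\rightharpoonup\tfrac14$, $1_{B_n}\rightharpoonup\tfrac14$ weak-$*$, so $g_A*g_B\equiv\tfrac1{16}$, while $1_{A_n}*1_{B_n}$ is the pullback along $\phi_n$ of a fixed tent function and stays a fixed $L^2$ distance from the constant $\tfrac1{16}$. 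So strong convergence genuinely fails for the class of functions you are considering, and the near-extremality hypothesis does not rescue it. Once the strong convergence is gone, the crucial passage ``$\mu(\{g_A*g_B\ge\eta\})\le\int g_A+\int g_B$'' is not obtained — weak-$*$ limits can create a large support out of sequences with small sumset (in the example above, $g_A*g_B$ has full support even though each $A_n+B_n$ has measure $\tfrac12$). Of course these particular $A_n,B_n$ already satisfy the conclusion of the theorem, so they are not themselves a counterexample to your scheme; but they show that the weak limit \emph{forgets} the Bohr structure when the underlying character escapes to infinity, which is exactly the degeneracy that the statement of Theorem~\ref{inv-1} is supposed to detect. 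The paper's argument works precisely because it extracts the relevant character $\phi$ \emph{before} anything like a limit is taken (via the growth rate $\mu_G(C_k)\approx k\mu_G(C)$ of iterated sumsets and a high-moment Fourier computation); to make your limit argument work you would need some device that ``twists out'' the escaping character first, at which point you would essentially be redoing that part of the paper's proof. A secondary caveat is that the final step of your argument (resolving the exact equality case down to arcs in $\R/\Z$) is Kneser's Satz~2, which in this paper is deduced as a corollary of Theorem~\ref{inv-1}; it does have an independent classical proof, so this is not circular, but the ``final quotient argument'' that you gloss over is a substantial piece of that classical proof and would need to be supplied.
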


In the case $G = \R/\Z$, this result was recently obtained in \cite[Theorem 1.5]{candela} (with a quite sharp dependence between $\eps$ and $\delta$), by a different method; see also the earlier work \cite{mfy}, \cite{fjm}.  In the case of a torus $G = (\R/\Z)^d$, when the measures of $A$ and $B$ are small and comparable to each other, this theorem was obtained (again with a sharp dependence between $\eps$ and $\delta$) in \cite[Theorem 1.4]{bilu}.

As a consequence of the above theorem, we can reprove a theorem of Kneser \cite[Satz 2]{kneser} classifying when equality holds in \eqref{kemp}:

\begin{corollary}\label{cor}  Let let $A,B$ be non-empty compact subsets of a compact connected abelian group $G$ such that equality holds in \eqref{kemp}.  Then at least one of the following statements hold:
\begin{itemize}
\item[(i)]  $\mu_G(A)=0$ or $\mu_G(B) = 0$.
\item[(ii)]  $A,B$ are parallel Bohr sets.
\item[(iii)] $\mu_G(A) + \mu_G(B) \geq 1$.
\end{itemize}
\end{corollary}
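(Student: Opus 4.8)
The plan is to feed the exact equality in \eqref{kemp} into Theorem \ref{inv-1} with the parameter $\delta$ sent to $0$, extract a single limiting homomorphism and two limiting arcs by compactness, and then upgrade the resulting almost-everywhere description of $A$ and $B$ into an exact one. We may assume that conclusions (i) and (iii) both fail, so that $\mu_G(A),\mu_G(B)>0$ and $\mu_G(A)+\mu_G(B)<1$, and our task is to prove (ii). Equality in \eqref{kemp} then reads $\mu_G(A+B)=\mu_G(A)+\mu_G(B)$, so in particular $\mu_G(A+B)\le\mu_G(A)+\mu_G(B)+\delta$ for \emph{every} $\delta>0$. Put $\eps_0\coloneqq\min(\mu_G(A),\mu_G(B),1-\mu_G(A)-\mu_G(B))>0$. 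For each integer $n\ge 1/\eps_0$ we apply Theorem \ref{inv-1} with $\eps=1/n$ and with $\delta$ small enough in terms of $n$, obtaining a continuous surjective homomorphism $\phi_n\colon G\to\R/\Z$ and compact arcs $I_n,J_n\subset\R/\Z$ with
$$\mu_G\bigl(A\,\Delta\,\phi_n^{-1}(I_n)\bigr),\ \mu_G\bigl(B\,\Delta\,\phi_n^{-1}(J_n)\bigr)\le 1/n.$$
Since the push-forward of $\mu_G$ under any continuous surjective homomorphism onto $\R/\Z$ is Lebesgue measure, $\mu_G(\phi_n^{-1}(I_n))=|I_n|$, so $|I_n|\to\mu_G(A)$ and $|J_n|\to\mu_G(B)$, both in $(0,1)$.

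First I would pin down the homomorphism, by showing that $\{\phi_n:n\ge 1/\eps_0\}$ is finite. If it were infinite, pass to a subsequence of pairwise distinct $\phi_n$. Since $G$ is connected, its dual $\widehat G$ is torsion-free, and each $\phi_n$, being surjective, has infinite order in $\widehat G$; writing $1_{\phi_n^{-1}(I_n)}=1_{I_n}\circ\phi_n$ and expanding $1_{I_n}$ in a Fourier series on $\R/\Z$ shows that, for a fixed character $\chi$ of $G$, the $\chi$-Fourier coefficient of $1_{\phi_n^{-1}(I_n)}$ equals the $k$-th Fourier coefficient of $1_{I_n}$ for the unique integer $k$ (if any) with $k\phi_n=\chi$. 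For $\chi\ne 0$ this $k=k_n$ is nonzero, and torsion-freeness of $\widehat G$ ensures that each fixed nonzero value of $k$ is realised for at most one $n$, so $|k_n|\to\infty$; as Fourier coefficients of indicators of arcs are $O(1/|k|)$ uniformly, the $\chi$-coefficient of $1_{\phi_n^{-1}(I_n)}$ tends to $0$ for every $\chi\ne0$ and to $\mu_G(A)$ for $\chi=0$. But $1_{\phi_n^{-1}(I_n)}\to 1_A$ in $L^1(\mu_G)$, so $1_A$ has the same Fourier coefficients as the constant function $\mu_G(A)$; hence $1_A=\mu_G(A)$ almost everywhere and $\mu_G(A)\in\{0,1\}$, a contradiction. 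Therefore some $\phi$ equals $\phi_n$ infinitely often; passing to that subsequence, then to a further subsequence along which the arcs converge, $I_n\to I$ and $J_n\to J$ (nondegenerate arcs with $|I|=\mu_G(A)$, $|J|=\mu_G(B)$, $|I|+|J|<1$), and using $\mu_G(\phi^{-1}(I_n)\,\Delta\,\phi^{-1}(I))=|I_n\,\Delta\,I|\to 0$, we conclude $\mu_G(A\,\Delta\,\phi^{-1}(I))=\mu_G(B\,\Delta\,\phi^{-1}(J))=0$.

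The last step — upgrading these null-difference statements to the exact identities $A=\phi^{-1}(I)$, $B=\phi^{-1}(J)$ — is the one place I expect real friction, since a compact set can agree with a Bohr set off a null set without itself being a Bohr set, and here the \emph{exact} equality $\mu_G(A+B)=\mu_G(A)+\mu_G(B)$ has to be used. Write $I=[a,b]$, $J=[c,d]$. First, since $A$ is closed and $\mu_G(\phi^{-1}(I)\setminus A)=0$, every $x\in\phi^{-1}(\mathrm{int}\,I)$ has an open neighbourhood $U$ with $\phi(U)\subseteq\mathrm{int}\,I$, whence $U\setminus A$ is an open null set and thus empty; so $\phi^{-1}(\mathrm{int}\,I)\subseteq A$, and similarly $\phi^{-1}(\mathrm{int}\,J)\subseteq B$. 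Second, for any $x\in A$ both $x+\phi^{-1}(\mathrm{int}\,J)$ and $\phi^{-1}(\mathrm{int}\,I)+\phi^{-1}(\mathrm{int}\,J)$ lie in $A+B$, so, computing these sumsets of preimages using the surjectivity of $\phi$,
$$A+B\ \supseteq\ \phi^{-1}\bigl((a+c,b+d)\cup(\phi(x)+c,\phi(x)+d)\bigr);$$
a direct computation with arcs in $\R/\Z$ shows the right-hand side has measure strictly larger than $|I|+|J|$ unless $\phi(x)\in[a,b]$, so $\mu_G(A+B)=|I|+|J|$ forces $A\subseteq\phi^{-1}(I)$. Third, $\phi$ is an open map (being a continuous surjective homomorphism from a compact group) and $I$ is nondegenerate, so $\overline{\phi^{-1}(\mathrm{int}\,I)}=\phi^{-1}(I)$; combined with $\phi^{-1}(\mathrm{int}\,I)\subseteq A\subseteq\phi^{-1}(I)$ and the closedness of $A$, this gives $A=\phi^{-1}(I)$. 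The identical argument gives $B=\phi^{-1}(J)$, and the two Bohr sets are parallel since they share $\phi$; this is exactly conclusion (ii).
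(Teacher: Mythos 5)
Your proposal is correct and follows the same overall strategy as the paper's proof: apply Theorem \ref{inv-1} with $\eps \to 0$, show that only finitely many distinct homomorphisms $\phi_n$ can arise so one may pass to a constant $\phi$, extract a limit arc by compactness, and then combine the exact equality $\mu_G(A+B)=\mu_G(A)+\mu_G(B)$ with the compactness of $A$ and $B$ (and the fact that a nonempty open set has positive Haar measure) to upgrade the almost-everywhere agreement to the exact identities $A=\phi^{-1}(I)$, $B=\phi^{-1}(J)$. The one substantive place you diverge is the finiteness-of-$\phi_n$ step: the paper observes that $|\hat 1_A(\phi_n)| = \frac{1}{\pi}\sin(\pi\mu_G(A))+o(1)$ is bounded away from zero and invokes Plancherel's theorem (so only boundedly many characters can carry that much mass), whereas you use torsion-freeness of $\hat G$ (a consequence of connectedness of $G$) together with the uniform $O(1/|k|)$ decay of Fourier coefficients of arc indicators to show that infinitely many pairwise distinct surjective $\phi_n$ would force every nonzero Fourier coefficient of $1_A$ to vanish, making $1_A$ constant a.e. Both arguments are sound; the paper's is a little shorter and avoids invoking torsion-freeness, while yours makes more explicitly visible the mechanism by which the characters stabilize. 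The final upgrade step is likewise the same in substance as the paper's, just phrased via measures of unions of arcs rather than via containment of $a+\phi^{-1}(J)$ in $\phi^{-1}(I+J)$ up to a null set.
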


We prove this corollary in Section \ref{cor-sec}.  

Much as \eqref{kemp} can be deduced from Corollary \ref{kemp-cor}, Theorem \ref{inv-1} will be deduced from the following variant:

\begin{theorem}[Inverse theorem, second form]\label{inv-2}  Let $\eps>0$, and suppose that $\delta>0$ is sufficiently small depending on $\eps$.  Then, for any measurable subsets $A,B$ of a compact connected abelian group $G$ with
$$ \mu_G(A), \mu_G(B), 1 - \mu_G(A) - \mu_G(B) \geq \eps$$
and
$$ \mu_G(A +_\delta B) \leq \mu_G(A) + \mu_G(B) + \delta,$$
there exist parallel Bohr sets $\phi^{-1}(I), \phi^{-1}(J)$ such that
$$ \mu_G( A \Delta \phi^{-1}(I) ), \mu_G( B \Delta \phi^{-1}(J) ) \leq \eps.$$
\end{theorem}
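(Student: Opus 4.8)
The strategy is a compactness-and-contradiction argument combined with a structural analysis of the near-extremal configuration. Suppose the conclusion of Theorem~\ref{inv-2} fails: then there is a fixed $\eps_0 > 0$, a sequence $\delta_n \to 0$, compact connected abelian groups $G_n$, and measurable sets $A_n, B_n \subseteq G_n$ satisfying the measure bounds and $\mu_{G_n}(A_n +_{\delta_n} B_n) \le \mu_{G_n}(A_n) + \mu_{G_n}(B_n) + \delta_n$, yet no pair of parallel Bohr sets $\eps_0$-approximates $(A_n, B_n)$. The first task is to extract a limiting object. I would pass to an ultraproduct (or a Gromov--Hausdorff / Benjamini--Schramm-type limit of the measure-algebra structures) to obtain a single compact connected abelian group $G$ — or more precisely a Loeb-measure space with a group structure — carrying limiting sets $A, B$ of measures $\alpha, \beta$ with $\alpha, \beta, 1 - \alpha - \beta \ge \eps_0$, and with the crucial exact extremal identity $\mu_G(A + B) = \alpha + \beta$ (the partial-sumset/$\delta_n$-thickening collapses to the genuine sumset in the limit because $\delta_n \to 0$; one uses Corollary~\ref{kemp-cor} in the reverse direction and the fact that convolutions of indicator functions behave continuously under the limit).

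**Analyzing the extremizer.** Now the heart of the matter is a rigidity statement: \emph{on a compact connected abelian group, if $A, B$ have positive measure, $\mu_G(A) + \mu_G(B) < 1$, and $\mu_G(A+B) = \mu_G(A) + \mu_G(B)$, then $A, B$ are (up to null sets) parallel Bohr sets.} I would prove this by the following steps. First, replace $A, B$ by their ``essential'' versions (points of density $1$) and show $A + B$ is genuinely a proper subset of $G$; since $G \setminus (A+B)$ has positive measure, and by a Steinhaus-type argument $A + B$ has nonempty interior, the complement cannot be too large. Next — the key move — apply the Kneser inequality in a \emph{localized} or \emph{iterated} form: consider $A + B + B$, $A + 2B$, etc.; equality in \eqref{kemp} forces equality to propagate, giving $\mu_G(A + nB) = \alpha + n\beta$ for all $n$ with $\alpha + n\beta < 1$, which is extremely restrictive. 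Then I would identify the subgroup $H$ generated by $B - B$ (or by the "stabilizer-like" set $\{h : \mu_G((A+B+h)\Delta(A+B)) = 0\}$), show the quotient $G/H$ must be $\R/\Z$ for the measures to work out (a proper positive-measure extremal set in a connected group is ``one-dimensional''), and push $A, B$ down to arcs via the continuous surjection $\phi : G \to G/H \cong \R/\Z$. On $\R/\Z$ itself the classification of equality in the Cauchy--Davenport/Raikov bound (arcs) is classical, and this is what I would invoke or reprove.

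**Transferring back.** Having shown the limit object is a pair of parallel Bohr sets, I would transfer this structure back to the approximating sequence. Concretely, the limiting homomorphism $\phi : G \to \R/\Z$ and arcs $I, J$ lift, for $n$ large, to continuous surjective homomorphisms $\phi_n : G_n \to \R/\Z$ (characters survive the ultraproduct by Pontryagin duality — a character of the limit is approximated by characters of the $G_n$) and arcs $I_n, J_n$ with $\mu_{G_n}(A_n \Delta \phi_n^{-1}(I_n))$ and $\mu_{G_n}(B_n \Delta \phi_n^{-1}(J_n))$ both tending to $0$. For $n$ sufficiently large these are below $\eps_0$, contradicting the assumption that $(A_n, B_n)$ is not $\eps_0$-approximable. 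This yields Theorem~\ref{inv-2}.

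**Main obstacle.** The hardest part, I expect, is the rigidity statement for the \emph{exact} extremizer in an \emph{arbitrary} compact connected abelian group — in particular, proving that the relevant quotient is forced to be $\R/\Z$ and that $A, B$ really are arcs upstairs (not merely ``sandwiched'' between arcs of equal measure, which is automatic, but genuinely equal up to null sets). Handling the iteration $A + nB$ cleanly, controlling the boundary/stabilizer, and ruling out degenerate configurations (such as $A$ living on a measure-zero subgroup with $B$ a union of its cosets — case (i) in Corollary~\ref{cor}, which the measure hypotheses $\alpha, \beta \ge \eps_0$ are designed to exclude) requires care. A secondary technical point is making the ultraproduct/limit argument rigorous: ensuring the Loeb measure space genuinely carries a compact group structure with a well-behaved Haar measure, and that convolution and the partial-sumset operation commute with the limit. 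An alternative to the compactness approach would be to run a purely quantitative argument — e.g. via a Freiman-type or compression argument, or adapting the $\R/\Z$ proof of \cite{candela} — but the compactness route cleanly separates the ``soft'' extraction from the ``hard'' exact classification, so I would pursue it first.
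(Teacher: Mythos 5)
Your strategy is genuinely different from the paper's, so it is worth being precise about where the gaps are. The paper does \emph{not} pass to a limit object and classify exact extremizers. It works throughout with the original sequence $(A_n,B_n,G_n)$, using ``cheap nonstandard analysis'' only as bookkeeping for $o(1)$ errors, and derives approximate structure directly via a chain of submodularity inequalities (Lemma~\ref{submod-lemma}), a shrinking argument (Corollary~\ref{iter}), an associativity/probabilistic-approximation step (Proposition~\ref{muto}) to build a small set $C$ with near-linear iterated growth, a Schoen/Green--Ruzsa Fourier argument to find the relevant character, and then a Riesz--Sobolev/Christ-type argument on the circle. Crucially, the exact classification (Kneser's Satz~2, i.e.\ Corollary~\ref{cor}) is never used as an input; it is re-derived as an \emph{output} of the approximate theorem. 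So the logical flow in the paper is the opposite of yours.

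The main gap in your proposal is the limit construction and, especially, the transfer back. An ultraproduct or Loeb-measure limit of a sequence of compact connected abelian groups $G_n$ is not itself a compact connected abelian group; it is a probability space carrying a measure-preserving group action, with no usable topology, and notions like ``connected'', ``compact'', and in particular Pontryagin duality do not survive in the naive sense. Consequently you cannot simply invoke the rigidity statement (``exact equality in Kneser on a compact connected abelian group forces parallel Bohr sets'') in the limit --- that theorem is about honest compact connected groups, not Loeb limits of them. You would be forced to reprove the rigidity statement for the nonstandard/Loeb object, and that re-proof is essentially the hard content of the paper. The transfer-back step is the other serious issue: a ``character'' of the limit object (whatever that means there) need not arise from a sequence of characters $\phi_n:G_n\to\R/\Z$. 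The dual of an ultraproduct is strictly larger than the ultraproduct of duals, and there is no guarantee the homomorphism you extract descends to the $G_n$'s. Your assertion that ``a character of the limit is approximated by characters of the $G_n$'' is precisely the statement that needs proving and is not automatic.

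Two smaller but real issues: first, your mechanism for identifying the relevant quotient --- taking $H$ to be the subgroup generated by $B-B$ --- does not work. If $B$ has positive measure in a connected group, then $B-B$ has nonempty interior (Steinhaus), so the subgroup it generates is open, hence all of $G$; the correct object is something like the stabilizer of $A+B$, but even that can be trivial in the equality case (take $A=B=[0,1/3]$ in $\R/\Z$), so more care is needed. Second, the paper explicitly warns (in a remark after Proposition~\ref{muto}) that even for Theorem~\ref{inv-1} one must work with the partial sumset $A+_\delta B$ rather than $A+B$, because the approximation by finite translates fails for the full sumset; your limit construction collapses $A+_{\delta_n}B$ to (essentially) $A+B$, which discards exactly the robustness the paper's argument relies on. Your high-level intuition --- that equality should propagate under iteration $A+kB$, and that this linear growth pins down Bohr structure --- is sound and is essentially what the paper does quantitatively, but the soft-limit packaging you propose has genuine unresolved obstructions.
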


Since $A +_\delta B$ is clearly contained in $A+B$, it is immediate that Theorem \ref{inv-2} implies Theorem \ref{inv-1}.  

The proof of Theorem \ref{inv-2} can be outlined as follows.  To simplify this outline, let us ignore all the $\eps$ and $\delta$ errors, in particular pretending that the partial sumset $A +_\delta B$ is the same as the full sumset $A+B$.  Let us informally call a pair $(A,B)$ a ``critical pair'' if the conditions of Theorem \ref{inv-2} are obeyed.  By using ``submodularity inequalities'' such as
$$ \mu_G( (A_1 \cup A_2) + B) + \mu_G( (A_1 \cap A_2) + B) \leq \mu_G(A_1+B) + \mu_G(A_2+B),$$
valid for any compact $A_1,A_2,B \subset G$, (which follow from the identity $(A_1 \cup A_2)+B = (A_1+B) \cup (A_2+B)$ and the inclusion $(A_1 \cap A_2)+B \subset (A_1+B) \cap (A_2+B)$ respectively), one can obtain a number of closure properties regarding critical pairs, for instance establishing that if $(A_1,B)$ and $(A_2,B)$ are critical pairs then $(A_1 \cup A_2,B)$ and $(A_1 \cap A_2,B)$ are also, provided that $A_1 \cap A_2$ is non-empty and $A_1 \cup A_2$ is not too large.  Similarly, using the associativity $(A+B)+C = A+(B+C)$ of the sum set operation, one can show that if $(A,B)$ and $(A+B,C)$ are critical pairs, then so are $(B,C)$ and $(A,B+C)$.  Using such closure properties repeatedly in combination with the translation invariance of the critical pair concept, we can start with a critical pair $(A,B)$ and generate a small (but non-trivial) auxiliary set $C$ such that $(A,C)$ and $(C,C)$ are critical pairs; furthermore, we can also arrange matters so that $(C,kC)$ is a critical pair for all bounded $k$ (e.g. all $1 \leq k \leq 10^4$), where $kC = C + \dots + C$ is the $k$-fold iterated sumset of $C$.  This implies in particular that $C$ has linear growth in the sense that $\mu_G(kC) \approx k\mu_G(C)$ for all bounded $k$, which by existing tools in inverse sumset theory (in particular using arguments of Schoen \cite{schoen} and Green-Ruzsa \cite{rect}, \cite{green}) can be used to show that $C$ is very close to a Bohr set.  As $(A,C)$ is a critical pair, some elementary analysis can then be deployed to show that $A$ is very close to a Bohr set parallel to $C$, and then as $(A,B)$ is also critical, $B$ is also very close to a Bohr set parallel to $A$, giving the claim.

In order to make notions such as ``critical pair'' rigorous, it will be convenient to use the language of ``cheap nonstandard analysis'' \cite{cheap}, working with a sequence $(A,B) = (A_n,B_n)$ of pairs in a sequence $G = G_n$ of groups, rather than with a single pair in a single group, so that asymptotic notation such as $o(1)$ can be usefully deployed.  It should however be possible to reformulate the arguments below without this language, at the cost of having to pay significantly more attention to various $\eps$ and $\delta$ type parameters.

In a subsequent paper with Joni Ter\"av\"ainen, we will combine this theorem with the structural theory of correlations of bounded multiplicative functions (as developed recently in \cite{jt}) to obtain new results about the distribution of sign patterns $(f_1(n+1), f_2(n+2),\dots, f_k(n+k))$ of various bounded multiplicative functions $f_1,\dots,f_k$ such as the Liouville function $\lambda(n)$, as well as generalisations such as $e^{2\pi i \Omega(n)/m}$ for a fixed natural number $m$, where $\Omega(n)$ denotes the number of prime factors of $n$ (counting multiplicity).

\begin{remark}  Results analogous to Theorem \ref{inv-1} are known when the connected group $G$ is replaced by the discrete group $\Z/p\Z$: see \cite{freiman}, \cite{rodseth}, \cite{serra}, \cite{rect}, \cite{blr}, \cite{g}, as well as some further discussion in \cite{hgz}.  In the recent paper \cite{candela}, these results (particularly those in \cite{g}) are used to establish the $G=\R/\Z$ case of Theorem \ref{inv-1}.  On the integers $\Z$, a version of Theorem \ref{inv-2} when $A, B \subset \Z$ have the same cardinality was obtained very recently in \cite[Corollary 5.2]{shao}.
\end{remark}

\subsection{Acknowledgments}

The author was supported by a Simons Investigator grant, the James and Carol Collins Chair, the Mathematical Analysis \&
Application Research Fund Endowment, and by NSF grant DMS-1266164.  The author is indebted to Joni Ter\"av\"ainen for key discussions that led to the author pursuing this question, and for helpful comments and corrections, and to Ben Green for some references.  The author also thanks John Griesmer and the anonymous referees for further corrections and suggestions.

\section{Proof of Theorem \ref{ruzsa-thm}}\label{ruz}

We now prove Theorem \ref{ruzsa-thm}.  By inner regularity of Haar measure and a limiting argument we may assume $A,B$ are compact.  In the case
$$ \mu_G(A) + \mu_G(B) - t \geq 1,$$
we see that the set $A \cap (x-B) = \{ y \in A: x-y \in B \}$ has measure at least $\mu_G(A)+\mu_G(B)-1 \geq t$ for every $x \in G$, and hence $1_A * 1_B(x) \geq t$ for all $x \in G$, giving the claim in this case.  Thus we may assume that $\mu_G(A)+\mu_G(B)-t < 1$.  We may also assume that $G$ is non-trivial, which (by the connectedness of $G$) implies that there exist measurable subsets of $G$ of arbitrary measure between $0$ and $1$.

Fix $G$, let $B$ be a compact subset of $G$, and let $0 \leq t \leq \mu_G(B)$ be a real number.  For any compact $A \subset G$, define the quantity
$$ c(A) \coloneqq \int_G \min(1_A * 1_B, t)\ d\mu_G - t (\mu_G(A) + \mu_G(B) -t),$$
and then for every $a \in [0,1]$, let $f(a)$ denote the infimum of $c(A)$ over all $A$ with $\mu_G(A)=a$.  Our task is to show that $f$ is non-negative on the interval $[t, 1-\mu_G(B)+t]$.

If $\mu_G(A) = 1-\mu_G(B)+t$, then by the previous discussion we have $1_A * 1_B(x) \geq t$ for all $x \in G$, and hence $c(A)=0$; hence $f(1-\mu_G(B)+t)=0$.  At the other extreme, if $\mu_G(A) = t$, then $1_A * 1_B(x) \leq t$ for all $x \in G$, and hence from Fubini's theorem we again have $c(A) = 0$.

Observe that if one modifies $A$ by a set of measure at most $\delta$, then $c(A)$ varies by $O(\delta)$.  From this we conclude that $f$ is Lipschitz continuous.  Thus, if we assume for contradiction that $f$ is not always non-negative; then there must exist a point $a$ in the interior of $[t, 1-\mu_G(B)+t]$ where $f$ attains a global negative minimum and is not locally constant in a neighbourhood of $a$.  In particular, there exist arbitrarily small $\eps$ such that
\begin{equation}\label{fae}
 f(a) < \frac{f(a-\eps) + f(a+\eps)}{2}.
\end{equation}

On the other hand, we observe the crucial submodularity property
\begin{equation}\label{submod}
 c(A_1) + c(A_2) \geq c(A_1 \cap A_2)+ c(A_1 \cup A_2)
\end{equation}
for all measurable sets $A_1,A_2 \subset G$.  To see this, we begin with the inclusion-exclusion identity
$$ 1_{A_1} + 1_{A_2} = 1_{A_1 \cap A_2} + 1_{A_1 \cup A_2}$$
which implies that
$$
 1_{A_1} * 1_B + 1_{A_2} * 1_B = 1_{A_1 \cap A_2} * 1_B + 1_{A_1 \cup A_2} * 1_B.
$$
Observe that for each $x \in G$, we have the pointwise inequalities
$$1_{A_1 \cap A_2} * 1_B(x) \leq 1_{A_1} * 1_B(x), 1_{A_2} * 1_B(x) \leq 1_{A_1 \cup A_2} * 1_B(x);$$
by the concavity of the map $x \mapsto \min(x,t)$ we therefore have the pointwise bound
\begin{equation}\label{a12b}
 \min( 1_{A_1} * 1_B , t ) + \min( 1_{A_2} * 1_B, t) \geq \min( 1_{A_1 \cap A_2} * 1_B, t) + \min( 1_{A_1 \cup A_2} * 1_B, t).
\end{equation}
Integrating over $G$ and using the inclusion-exclusion formula $\mu_G(A_1) + \mu_G(A_2) = \mu_G(A_1 \cap A_2) + \mu_G(A_1 \cup A_2)$, we obtain \eqref{submod} as desired.

Let $A$ be such that $\mu_G(A)=a$, and let $\eps>0$ be a small quantity such that \eqref{fae} holds.  Now we observe the following application of connectedness:

\begin{lemma}\label{cont}  Let $A$ be a measurable subset of $G$, and let $t$ be any real number with $\mu_G(A)^2 \leq t \leq \mu_G(A)$.  Then there exists $x \in G$ such that $\mu_G(A \cap (x+A)) = t$.
\end{lemma}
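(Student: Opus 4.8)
The plan is to apply the intermediate value theorem to the continuous function $g(x) \coloneqq \mu_G(A \cap (x+A))$ on the connected group $G$. First I would check that $g$ is continuous: since $\mu_G(A \cap (x+A)) = 1_A * 1_{-A}(x)$ (using that $x \in G \mapsto -x$ is measure-preserving), and convolutions of $L^2$ functions on a compact group are continuous, $g$ is continuous on $G$. (Alternatively one argues directly that $|g(x)-g(y)| \le \mu_G((x+A)\Delta(y+A))$ and that translation is continuous in measure.) Since $G$ is connected, the image $g(G)$ is a connected subset of $\R$, i.e. an interval.

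Next I would pin down two values in the range of $g$ that straddle $t$. At $x = 0$ we have $g(0) = \mu_G(A)$, which gives the upper endpoint $\mu_G(A) \ge t$. For the lower endpoint, the key identity is that averaging $g$ over $G$ against Haar measure gives
\begin{equation*}
\int_G g(x)\ d\mu_G(x) = \int_G 1_A * 1_{-A}(x)\ d\mu_G(x) = \mu_G(A)^2,
\end{equation*}
by Fubini. Hence the average of $g$ is $\mu_G(A)^2 \le t$, so $g$ cannot be everywhere strictly greater than $t$; that is, $\inf_{x \in G} g(x) \le \mu_G(A)^2 \le t$. Combining, the interval $g(G)$ contains points $\le t$ and the point $\mu_G(A) \ge t$, so by the intermediate value theorem there exists $x \in G$ with $g(x) = t$, as required.

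The main thing to be careful about is the continuity claim, since $A$ is only assumed measurable rather than compact or open; but this is handled cleanly by writing $g = 1_A * 1_{-A}$ and invoking continuity of convolutions of $L^2$ (indeed $L^1 \times L^\infty$) functions on a compact group, or equivalently by the continuity of the translation action on $L^1(G)$. Everything else is a routine application of connectedness (to get that the image is an interval) together with the Fubini computation of the average of $g$. I expect no serious obstacle here; the lemma is essentially a packaging of the standard fact that on a connected compact group a continuous function attains every value between its minimum and its mean, applied to the autocorrelation function of $1_A$.
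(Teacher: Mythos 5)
Your proof is correct and is essentially identical to the paper's argument: the paper likewise observes that $x \mapsto 1_A * 1_{-A}(x) = \mu_G(A \cap (x+A))$ is continuous (as a convolution of $L^2$ functions), equals $\mu_G(A)$ at $x=0$, and has mean $\mu_G(A)^2$ by Fubini, then concludes by the intermediate value theorem and connectedness of $G$. No differences worth noting.
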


\begin{proof}
 The function $x \mapsto 1_A * 1_{-A}(x) = \mu_G( A \cap (x+A))$, being a convolution of $L^2$ functions, is a continuous function of $x$ that equals $\mu_G(A)$ when $x=0$, and has a mean value of $\mu_G(A)^2$ on $G$ by Fubini's theorem.  The claim then follows from the intermediate value theorem and the connectedness of $G$.
\end{proof}

By Lemma \ref{cont}, there exists $x \in G$ such that $\mu_G( A \cap (x+A) ) = a - \eps$, and hence by inclusion-exclusion $\mu_G(A \cup (x+A) ) = a+\eps$.  From \eqref{submod} with $A_1,A_2$ replaced by $A, x+A$ we have
$$ c( A) + c(x+A) \geq c(A \cap (x+A)) + c(A \cup (x+A)) \geq f(a-\eps) + f(a+\eps).$$
By translation invariance we have $c(x+A) = c(A)$, hence
$$ 2c(A) \geq f(a-\eps) + f(a+\eps).$$
Taking infima over all $A$ with $\mu_G(A) = a$, we contradict \eqref{fae}, and the claim follows.

\begin{remark}  With some minor notational modifications, this argument also works for nonabelian compact connected groups; see \cite{blog}.
\end{remark}

\section{Proof of Corollary \ref{cor}}\label{cor-sec}

We now prove Corollary \ref{cor}.  Suppose that $A,B$ are compact subsets of a compact connected abelian group $G$ are such that equality holds in \eqref{kemp}.  We may assume that $\mu_G(A), \mu_G(B), 1 - \mu_G(A)-\mu_G(B) > 0$, since we are done otherwise.  Applying Theorem \ref{inv-1}, we conclude that there exist sequences $\phi_n^{-1}(I_n), \phi_n^{-1}(J_n)$ of parallel Bohr sets such that
$$ \mu_G( A \Delta \phi^{-1}_n(I_n) ), \mu_G( B \Delta \phi^{-1}_n(J_n) )  = o(1),$$
where in this section we use $o(1)$ to denote a quantity that goes to zero as $n \to \infty$.  In particular, the arcs $I_n,J_n$ in the circle $\R/\Z$ have measure
$$ \mu_{\R/\Z}(I_n) = \mu_G(A) + o(1), \mu_{\R/\Z}(J_n) = \mu_G(B) + o(1).$$
Taking Fourier coefficients, we see that
\begin{align*}
 \left| \int_G 1_A(x) e^{2\pi i \phi_n(x)}\ d\mu_G(x) \right| &= \left|\int_{I_n} e^{2\pi i \alpha}\ d\mu_{\R/\Z}(\alpha)\right| + o(1) \\
&= \frac{1}{\pi} \sin(\pi \mu_{\R/\Z}(I_n)) + o(1) \\
&= \frac{1}{\pi} \sin(\pi \mu_G(A)) + o(1).
\end{align*}
On the other hand, from Plancherel's theorem we have
$$ \sum_{\phi \in \hat G}  \left| \int_G 1_A(x) e^{2\pi i \phi(x)}\ d\mu_G(x) \right|^2 = \mu_G(A)$$
where the Pontryagin dual group $\hat G$ consists of all continuous homomorphisms $\phi$ from $G$ to $\R/\Z$.  Thus, for $n$ large enough, there are only boundedly many possible choices for $\phi_n$, and by passing to a subsequence if necessary we may assume that $\phi_n = \phi$ does not depend on $n$.  For $n,n' \to \infty$, we now have
$$ \mu_G( A \Delta \phi^{-1}(I_n) ), \mu_G( A \Delta \phi^{-1}(I_{n'}) ) \to 0,$$
and hence by the triangle inequality
$$ \mu_{\R/\Z}( I_n \Delta I_{n'} ) = \mu_G( \phi^{-1}(I_n) \Delta \phi^{-1}(I_{n'}) ) \to 0$$
as $n,n' \to \infty$.
By the Bolzano-Weierstrass theorem, we may thus find a compact arc $I$ independent of $n$ such that
$$ \mu_{\R/\Z}( I_n \Delta I) \to 0$$
as $n \to \infty$, which implies that
$$ \mu_G( \phi^{-1}(I_n) \Delta \phi^{-1}(I) ) \to 0$$
as $n \to \infty$.  Hence by the triangle inequality, $A$ and $\phi^{-1}(I)$ must agree $\mu_G$-almost everywhere; as $A$ is compact, it cannot omit any interior point of $\phi^{-1}(I)$ (as this would also exclude a set of positive $\mu_G$ measure from $A$, and hence $A$ must therefore consist of the union of $\phi^{-1}(I)$ and a $\mu_G$-null set $E$.  Similarly, there is a compact arc $J$ such that $B$ consists of the union of $\phi^{-1}(J)$ and a $\mu_G$-null set $F$.  Thus $A+B$ contains $\phi^{-1}(I+J)$, which has measure 
$$\mu_{\R/\Z}(I+J) = \mu_{\R/\Z}(I)+\mu_{\R/\Z}(J) = \mu_G(\phi^{-1}(I)) + \mu_G(\phi^{-1}(J)) = \mu_G(A) + \mu_G(B);$$
since \eqref{kemp} holds, we conclude that $A+B$ is in fact equal to the union of $\phi^{-1}(I+J)$ and a $\mu_G$-null set.  Thus for every $a \in A$, the set $a + \phi^{-1}(J)$ lies in the union of $\phi^{-1}(I+J)$ and a $\mu_G$-null set, which forces $a$ to lie in $\phi^{-1}(I)$; thus $A = \phi^{-1}(I)$, and similarly $B = \phi^{-1}(J)$, giving the claim.

\section{Proof of Theorem \ref{inv-2}}\label{cheap}

We now prove Theorem \ref{inv-2}.  It will be convenient to reformulate the result in terms of a ``cheap'' form of nonstandard analysis (as used in \cite{cheap}), involving sequences of potential counterexamples.  The full machinery of nonstandard analysis, such as ultraproducts and the construction of Loeb measure, will not be needed for this reformulation; one could certainly insert such machinery into the arguments below, but they do not appear to dramatically simplify the proofs.

We will need a natural number parameter $n$.  In the sequel, all mathematical objects will be permitted to depend on this parameter (and can thus be viewed as a sequence of objects), unless explicitly declared to be ``fixed''.  Usually we will suppress the dependence on $n$.  For instance, a sequence $G_n$ of compact abelian groups will be abbreviated as $G = G_n$.  A real number $x = x_n$ depending on $n$ is said to be \emph{infinitesimal} if one has $\lim_{n \to \infty} x_n = 0$, in which case we write $x = o(1)$.  If $x = x_n$, $y = y_n$ are real numbers such that $|x_n| \leq Cy_n$ for all sufficiently large $n$ and some fixed $C>0$, we write $x \ll y$, $y \gg x$, or $x = O(y)$.
Two measurable subsets $A = A_n$, $B = B_n$ of a compact abelian group $G = G_n$ are said to be \emph{asymptotically equivalent} if one has $\mu_G( A \Delta B ) = o(1)$.  This is clearly an equivalence relation.

Theorem \ref{inv-2} can now be deduced from the following variant:

\begin{theorem}[Inverse theorem, cheap nonstandard form]\label{inv-3}  Let $A = A_n, B = B_n$ be measurable subsets of a sequence $G = G_n$ of compact connected abelian groups with
\begin{equation}\label{muab}
 \mu_G(A), \mu_G(B), 1 - \mu_G(A) - \mu_G(B) \gg 1
\end{equation}
and
\begin{equation}\label{muab-2}
 \mu_G(A +_\delta B) \leq \mu_G(A) + \mu_G(B) + o(1)
\end{equation}
for some infinitesimal $\delta > 0$. Then there exist parallel Bohr sets $\phi^{-1}(I) = \phi_n^{-1}(I_n)$ and $\phi^{-1}(J) = \phi_n^{-1}(J_n)$ in $G = G_n$ such that $A$ and $B$ are asymptotically equivalent to $\phi^{-1}(I), \phi^{-1}(J)$ respectively.
\end{theorem}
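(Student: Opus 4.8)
The plan is to make rigorous the outline given in the introduction, within the cheap nonstandard framework. Say that a pair $(A,B) = (A_n,B_n)$ of compact subsets of $G = G_n$ is a \emph{critical pair} if $\mu_G(A), \mu_G(B), 1 - \mu_G(A) - \mu_G(B) \gg 1$ and $\mu_G(A +_\delta B) \le \mu_G(A) + \mu_G(B) + o(1)$ for some infinitesimal $\delta > 0$; by Corollary \ref{kemp-cor} this is equivalent to $\mu_G(A +_\delta B) = \mu_G(A) + \mu_G(B) + o(1)$, i.e.\ to the Kneser bound being asymptotically sharp. The goal is then exactly to show that every critical pair is asymptotically equivalent to a pair of parallel Bohr sets, and by inner regularity we may take all sets compact. (Theorem \ref{inv-2} follows from Theorem \ref{inv-3} by the usual contradiction argument: a sequence of putative counterexamples to Theorem \ref{inv-2} with $\delta \to 0$ is precisely a critical pair.) The notion of critical pair is manifestly invariant under translating $A$ or $B$.

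The first task is to extract closure properties of critical pairs. From the pointwise concavity inequality \eqref{a12b}, applied with the truncation parameter $t$ set to a suitable small power of $\delta$, integrated over $G$, and combined with the identity $\int_G 1_A * 1_B\,d\mu_G = \mu_G(A)\mu_G(B)$ to bound the contribution of the region where $1_A * 1_B < t$, one obtains an approximate submodularity: if $(A_1,B)$ and $(A_2,B)$ are critical pairs with $\mu_G(A_1 \cap A_2) \gg 1$ and $1 - \mu_G(A_1 \cup A_2) - \mu_G(B) \gg 1$, then $(A_1 \cap A_2, B)$ and $(A_1 \cup A_2, B)$ are critical pairs too. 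Similarly, from the associativity $1_A * 1_B * 1_C = 1_{A+B} * 1_C = 1_A * 1_{B+C}$, if $(A,B)$ and $(A + B, C)$ are critical pairs then so are $(B,C)$ and $(A, B + C)$. The essential subtlety here is that, unlike the full sumset, the partial sumset $+_\delta$ is neither exactly monotone nor exactly additive, so every such manipulation introduces an $o(1)$ error, and some care is needed to run only boundedly many manipulations so that these errors do not accumulate; this is the technical cost of the robust formulation.

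Using these closure properties together with Lemma \ref{cont} and translation invariance, the next step is to manufacture, from the given critical pair $(A,B)$, a small auxiliary set. Fixing a large constant $K$, the aim is to build a compact $C$ with $\mu_G(C)$ equal to a fixed positive constant less than $1/K$ such that $(A,C)$ is a critical pair and $(C, jC)$ is a critical pair for every fixed $1 \le j \le K$; the latter is a form of linear growth for $C$. The reduction in size comes from the union/intersection closure: by Lemma \ref{cont}, for any target $m \in [\mu_G(A)^2, \mu_G(A)]$ there is $x$ with $\mu_G(A \cap (x + A)) = m$, and since the translated pair $(x + A, B)$ is also critical, so is $(A \cap (x+A), B)$; iterating this ``halving'' a bounded number of times produces a critical pair whose first coordinate has arbitrarily small fixed measure. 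Interleaving such slicing with the associativity transfer, and finally arranging the linear-growth condition that $(C,jC)$ be critical for all $j \le K$, is the combinatorial heart of the argument (the ``arranging matters'' of the introduction) and, with the $\delta$-bookkeeping of the previous step, is where I expect the most delicate case analysis.

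Finally, the linear growth of the small set $C$ is handed to inverse sumset theory. A subset $C$ of a compact connected abelian group with $\mu_G(C)$ a small fixed constant and $\mu_G(jC)$ (or its partial-sumset analogue) equal to $j\mu_G(C) + o(1)$ for $j$ up to a large fixed bound must be asymptotically equivalent to a Bohr set $\psi^{-1}(L)$, obtained by locating a large nontrivial Fourier coefficient of $1_C$ — which gives a continuous surjective homomorphism $\psi : G \to \R/\Z$ placing $C$ essentially inside $\psi^{-1}$ of a short arc — and then rectifying into $\R/\Z$ or $\Z$ and applying Freiman-type structure theorems in the manner of Schoen and Green--Ruzsa. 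This is the step I expect to be the main obstacle: upgrading approximate structure to an honest Bohr set in the connected compact setting, with a quantitative statement strong enough to survive the nonstandard limit. Granting it, an elementary Fourier computation finishes the proof: since $(A, C)$ is a critical pair and $C$ is asymptotically a fibre of $\psi$ over a short arc, the measure of $A +_\delta C$ can fail to exceed $\mu_G(A) + \mu_G(C)$ by only $o(1)$ if $A$ is, up to a null set, a union of $\psi$-fibres over an arc, i.e.\ a Bohr set $\phi^{-1}(I)$ with $\phi = \psi$; applying the same computation to the critical pair $(A, B)$ shows $B$ is asymptotically equivalent to a parallel Bohr set $\phi^{-1}(J)$, as required.
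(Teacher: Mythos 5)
Your proposal tracks the paper's own proof closely: both define critical pairs in the cheap-nonstandard framework, establish submodularity (Lemma \ref{submod-lemma}) and associativity closure (Proposition \ref{muto}), shrink to a small auxiliary set $C$ with linear growth of iterated partial sumsets (Corollary \ref{iter} and \eqref{muck}), locate a character via Schoen-type moment estimates, and then propagate the Bohr structure from $C$ back to $A$ and $B$ (Proposition \ref{p1}). The two obstacles you correctly flag — controlling the $o(1)$ errors across boundedly many partial-sumset manipulations, and upgrading a large Fourier coefficient to an actual Bohr set — are resolved in the paper respectively by approximating the almost-sumset $A +_{o(1)} B_1$ by finite sumsets $A + X_m$ via the probabilistic method, and by quotienting the located character by a natural number $m \approx \tau^{-1}$ through a monodromy argument on $C''-C''$ (rather than by invoking a Freiman-type theorem after rectification).
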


Let us assume Theorem \ref{inv-3} for now and see how it implies Theorem \ref{inv-2} (and hence also Theorem \ref{inv-1}).  Suppose for contradiction that Theorem \ref{inv-2} fails.  Carefully negating the quantifiers, and applying the axiom of choice, we conclude that there exists an $\eps>0$, such that for every natural number $n$ there are measurable subsets $A = A_n, B = B_n$ of a compact connected abelian group $G = G_n$ such that for every $n$ one has
$$ \mu_G(A), \mu_G(B), 1 - \mu_G(A) - \mu_G(B) \geq \eps$$
and
$$ \mu_G(A +_{1/n} B) \leq \mu_G(A) + \mu_G(B) + \frac{1}{n},$$
but such that for each $n$, there do \emph{not} exist parallel Bohr sets $\phi_n^{-1}(I_n), \phi_n^{-1}(J_n)$ such that
$$ \mu_G( A_n \Delta \phi_n^{-1}(I_n) ), \mu_G( B_n \Delta \phi^{-1}(J_n) ) \leq \eps.$$
By applying Theorem \ref{inv-3} with the infinitesimal $\delta = \delta_n \coloneqq \frac{1}{n}$, we know that $A,B$ are asymptotically equivalent respectively to parallel Bohr sets $\phi^{-1}(I), \phi^{-1}(J)$.  But by taking $n$ large enough, this contradicts the previous statement.

It remains to prove Theorem \ref{inv-3}.  One of the main reasons of passing to this formulation is that it allows for\footnote{The price one pays for this is that it is difficult to directly extract from this argument an explicit dependence of $\delta$ on $\eps$ in Theorem \ref{inv-2}.  However, this can be done (in principle, at least) by refraining from passing to the ``cheap nonstandard'' framework and instead working with a more quantitative, but significantly messier, notion of critical pair, in which one replaces all $o(1)$ errors by more explicit decay rates that may vary from line to line.  We leave this task to the interested reader.} the following convenient definition.  In the sequel $G = G_n$ is understood to be a sequence of compact connected abelian groups with probability Haar measure $\mu = \mu_n$.  A pair $(A,B)$ of measurable subsets of $G$ is said to be a \emph{critical pair}\footnote{A more accurate terminology would be ``asymptotically critical pair'', but we use ``critical pair'' instead for brevity.}  if one has the properties \eqref{muab}, \eqref{muab-2} for some infinitesimal $\delta>0$.  Our goal is thus to prove that every critical pair is equivalent to a pair of parallel Bohr sets.

It turns out that the space of critical pairs is closed under a number of operations.  Clearly it is symmetric: $(A,B)$ is a critical pair if and only if $(B,A)$ is.  It is also obvious that if $(A,B)$ is a critical pair, then so is $(A+x,B+y)$ for any $x,y \in G$, where $A+x \coloneqq \{ a +x: a \in A \}$ denotes the translate of $A$ by $x$.
Next, we observe that it is insensitive to asymptotic equivalence:

\begin{lemma}\label{crit-equiv}  Suppose that $(A,B)$ is a critical pair, and that $A'$ is asymptotically equivalent to $A$.  Then $(A',B)$ is also a critical pair.
\end{lemma}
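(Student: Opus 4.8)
The plan is to transfer the two defining properties of a critical pair from $(A,B)$ to $(A',B)$, using only that $\eta \coloneqq \mu_G(A \Delta A') = o(1)$.

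For the measure bounds \eqref{muab}, I would simply observe that $|\mu_G(A') - \mu_G(A)| \leq \eta = o(1)$, so that $\mu_G(A')$, $\mu_G(B)$, and $1 - \mu_G(A') - \mu_G(B)$ are bounded away from $0$ as soon as the corresponding quantities for $(A,B)$ are.

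The only point requiring a small argument is the partial sumset bound \eqref{muab-2}. I would first record the pointwise estimate
$$ |1_{A'} * 1_B(x) - 1_A * 1_B(x)| \leq |1_{A'} - 1_A| * 1_B(x) = 1_{A \Delta A'} * 1_B(x) \leq \mu_G(A \Delta A') = \eta, $$
valid at every $x \in G$, since $1_B$ is nonnegative and bounded by $1$. Let $\delta > 0$ be an infinitesimal witnessing \eqref{muab-2} for $(A,B)$, and put $\delta' \coloneqq \delta + \eta$, which is again strictly positive and infinitesimal. If $1_{A'} * 1_B(x) \geq \delta'$, then $1_A * 1_B(x) \geq \delta' - \eta = \delta$, so $A' +_{\delta'} B \subseteq A +_\delta B$; hence
$$ \mu_G(A' +_{\delta'} B) \leq \mu_G(A +_\delta B) \leq \mu_G(A) + \mu_G(B) + o(1) = \mu_G(A') + \mu_G(B) + o(1), $$
which is exactly \eqref{muab-2} for $(A',B)$ with infinitesimal $\delta'$. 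Therefore $(A',B)$ is a critical pair.

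I do not expect any genuine obstacle here: this is a routine perturbation computation, and the only thing to be mildly careful about is enlarging the threshold from $\delta$ to $\delta' = \delta + \eta$ so as to absorb the $O(\eta)$ fluctuation of the convolution while keeping the new threshold both positive and infinitesimal.
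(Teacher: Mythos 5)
Your proposal is correct and is essentially the paper's proof: you establish the same pointwise bound $|1_{A'}*1_B - 1_A*1_B| \leq \mu_G(A\Delta A')$, deduce the same inclusion $A' +_{\delta+\eta} B \subset A +_\delta B$, and then conclude. The only cosmetic difference is that you spell out the verification of \eqref{muab}, which the paper leaves to the reader.
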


Of course by symmetry, the same statement holds if we replace $B$ by an asymptotically equivalent $B'$.  Thus one only needs to know $A,B$ up to asymptotic equivalence to determine if $(A,B)$ form a critical pair.

\begin{proof}  By hypothesis, there exists an infinitesimal $\eps>0$ such that
$$ \mu_G(A' \Delta A) \leq \eps,$$
which implies the pointwise bound
$$ | 1_{A'} * 1_B - 1_A * 1_B | \leq \eps$$
and hence we have the inclusion
$$ A' +_{\delta+\eps} B \subset A +_\delta B$$
for any $\delta>0$.  On the other hand, as $(A,B)$ is a critical pair, there exists an infinitesimal $\delta>0$ such that
$$  \mu_G(A +_\delta B) \leq \mu_G(A) + \mu_G(B) + o(1),$$
and hence
$$  \mu_G(A' +_{\delta+\eps} B) \leq \mu_G(A') + \mu_G(B) + o(1).$$
From this we easily verify that $(A',B)$ is a critical pair as claimed.
\end{proof}

We can now simplify the problem by observing that if one element $(A,B)$ of a critical pair is already asymptotically equivalent to a Bohr set, then so is the other:

\begin{proposition}\label{p1}  Let $(A,B)$ be a critical pair, and suppose that $B$ is asymptotically equivalent to a Bohr set $\phi^{-1}(J)$. Then $A$ is asymptotically equivalent to a parallel Bohr set $\phi^{-1}(I)$.
\end{proposition}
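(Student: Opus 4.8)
The plan is to use the disintegration of $\mu_G$ along $\phi$ to reduce to a one-dimensional problem on $\R/\Z$, and then to the case of the inverse theorem on the circle in which one summand is an arc. First, by Lemma \ref{crit-equiv} we may replace $B$ by the Bohr set $\phi^{-1}(J)$ itself, so assume $B = \phi^{-1}(J)$; write $a := \mu_G(A)$ and $b := \mu_{\R/\Z}(J) = \mu_G(B)$, so that $a,b,1-a-b \gg 1$. With $H := \ker\phi$ we have $G/H \cong \R/\Z$; disintegrating $\mu_G$ over the fibres of $\phi$ and letting $g\colon \R/\Z \to [0,1]$ be the function whose value at $s$ is the normalised Haar measure of $A$ on the coset $\phi^{-1}(s)$, we get $\int_{\R/\Z} g = a$ and, by Fubini's theorem,
\[ 1_A * 1_B(x) = (g*1_J)(\phi(x)) \qquad (x \in G), \]
the right-hand convolution being on $\R/\Z$. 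Hence $A +_\delta B = \phi^{-1}(\{g*1_J \ge \delta\})$, so the hypothesis becomes $\mu_{\R/\Z}(\{g*1_J \ge \delta\}) \le a + b + o(1)$; moreover $\mu_G(A\Delta\phi^{-1}(I)) = \int_{\R/\Z}|g - 1_I|$ for every arc $I$. Thus it suffices to produce arcs $I = I_n$ with $\|g - 1_I\|_{L^1(\R/\Z)} = o(1)$.

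Next I would control the superlevel sets $A_\lambda := \{g \ge \lambda\}$ (working on $\R/\Z$, with $\mu$ now denoting $\mu_{\R/\Z}$). For each small fixed $\lambda > 0$, the bound $a = \int g \le \mu(A_\lambda) + \lambda$ gives $\mu(A_\lambda) \ge a - \lambda \gg 1$; and since $1_{A_\lambda} \le g/\lambda$, we have $A_\lambda +_{\delta/\lambda}J \subseteq \{g*1_J \ge \delta\}$, so $\mu(A_\lambda +_{\delta/\lambda}J) \le a + b + o(1)$. On the other hand, Corollary \ref{kemp-cor} applied on $\R/\Z$ to $A_\lambda$ and $J$ (legitimately, as $\delta/\lambda = o(1)$ while $\mu(A_\lambda),b \gg 1$) gives $\mu(A_\lambda +_{\delta/\lambda}J) \ge \min(\mu(A_\lambda)+b,1) - o(1) = \mu(A_\lambda) + b - o(1)$, the first alternative being excluded by $a+b<1$. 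Comparing yields $\mu(A_\lambda) \le a + o(1)$, whence $\|g - 1_{A_\lambda}\|_{L^1} = \int_{A_\lambda}(1-g) + \int_{A_\lambda^c}g \ll \lambda + o(1)$. A diagonalisation then produces $\lambda = \lambda_n \to 0$ along which $\delta/\lambda_n = o(1)$, $\|g - 1_{A_{\lambda_n}}\|_{L^1} = o(1)$ and $\mu(A_{\lambda_n}) = a + o(1)$ all hold; in particular $(A_{\lambda_n}, J)$ is a critical pair on the sequence of circles $\R/\Z$, with $J$ an arc.

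It remains to prove: \emph{if $(A', J)$ is a critical pair on $\R/\Z$ and $J$ is an arc, then $A'$ is asymptotically equivalent to an arc.} Granting this, $A_{\lambda_n}$ is asymptotically equivalent to an arc $I_n$, so $\|g - 1_{I_n}\|_{L^1} \le \|g - 1_{A_{\lambda_n}}\|_{L^1} + \mu(A_{\lambda_n}\Delta I_n) = o(1)$, and unwinding the reduction gives $\mu_G(A\Delta\phi^{-1}(I_n)) = o(1)$, with $\phi^{-1}(I_n)$ parallel to $B$ by construction. To prove the displayed circle statement I would analyse the ``gaps'' of $A'$: writing $\gamma_1 \ge \gamma_2 \ge \cdots$ for the lengths of the complementary arcs of (a compact model of) $A'$, one has the identity $\mu(A'+J) - \mu(A') = \sum_i \min(\gamma_i, \mu(J))$, while the Kneser bound gives $\mu(A'+J) \ge \mu(A') + \mu(J) - o(1)$. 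If one also has the matching upper bound $\mu(A'+J) \le \mu(A') + \mu(J) + o(1)$, then $\sum_i \min(\gamma_i,\mu(J)) = \mu(J) + o(1)$; since $\mu(J) \gg 1$ at most one $\gamma_i$ can exceed $\mu(J)$, and using $1-\mu(A')-\mu(J) \gg 1$ to rule out the case $\gamma_1 < \mu(J)$ one concludes $\gamma_1 \ge \mu(J)$ and $\sum_{i \ge 2}\gamma_i = o(1)$. Hence $\R/\Z \setminus A'$ --- and so $A'$ --- is within $o(1)$ in measure of an arc.

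The step I expect to be the main obstacle is obtaining the upper bound $\mu(A'+J) \le \mu(A')+\mu(J)+o(1)$ from the critical-pair hypothesis, which is phrased in terms of the partial sumset $A' +_\delta J$ rather than $A' + J$. These genuinely differ here: a thin, isolated piece of $A'$ of measure below $\delta$ produces a ``bump'' of $1_{A'}*1_J$ of height $<\delta$ that is invisible to $+_\delta$ although it enlarges $A'+J$ by about $\mu(J)$. One must therefore show separately that such pieces are few --- again by counting components, using $\mu(J) \gg 1$ so that $A'+J$ and $A' +_\delta J$ each have $O(1)$ components --- and individually of measure $o(1)$, so that discarding them changes $\mu(A' +_\delta J)$, $\mu(A')$ and the final conclusion by only $o(1)$, after which the gap argument above applies to the trimmed set. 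Alternatively, this circle case --- partial sumset, one summand an arc --- can be quoted from the known inverse Kneser theory on $\R/\Z$ (in which case one also notes that the Bohr direction produced for $A_{\lambda_n}$ must be the identity, since $\mu(J) \gg 1$).
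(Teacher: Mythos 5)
The first half of your proof --- reducing to $B = \phi^{-1}(J)$, passing to the fibrewise density $g$ (which is exactly the paper's pushforward $f_A = \phi_* 1_A$), bounding the superlevel sets $\{g\ge\lambda\}$ via Corollary~\ref{kemp-cor}, and diagonalising to get $\mu_{\R/\Z}(A_{\lambda_n}) = a+o(1)$ with $\|g - 1_{A_{\lambda_n}}\|_{L^1} = o(1)$ --- is essentially the paper's argument. The genuine gap is exactly where you flag it: to run the gap identity $\mu(A'+J)-\mu(A') = \sum_i\min(\gamma_i,\mu(J))$ you need the \emph{full}-sumset bound $\mu(A'+J)\le\mu(A')+\mu(J)+o(1)$, whereas the critical-pair condition only gives this for $A'+_\delta J$, and (as your own example shows) the two really can differ by an amount comparable to $\mu(J)$ per stray thin piece of $A'$. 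The trimming plan you sketch does not close this. For a superlevel set $A'=\{g\ge\lambda_n\}$ of a generic measurable function there is no clean notion of ``piece'' (think of a fat Cantor set); the $O(1)$ bound on the number of components of $A'+J$ does not by itself tell you which parts of $A'$ are invisible to $+_\delta$, nor does it bound the number of components of $A'+_\delta J$ inside a single component of $A'+J$, since $1_{A'}*1_J$, though Lipschitz, can oscillate about the level $\delta$ arbitrarily often. Your alternative of quoting known $\R/\Z$ inverse results also does not work as stated: those (e.g.\ Candela--de Roton) are for the full sumset, and appealing to the $\R/\Z$ case of Theorem~\ref{inv-2} would be circular here.

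The paper resolves exactly this obstruction by \emph{smoothing before doing geometry}. It sets $H_\sigma := F_\kappa +_{\sigma^2}([0,\sigma]\ \text{mod}\ \Z)$ for a small fixed $\sigma>0$; the point is that $x\in H_\sigma$ already certifies $1_{F_\kappa}*1_{[0,t]\ \text{mod}\ \Z}\ge\sigma^2$ on the whole arc $x+([0,t-\sigma]\ \text{mod}\ \Z)$, so the \emph{full} sumset $H_\sigma+([0,t-\sigma]\ \text{mod}\ \Z)$ is $o(1)$-contained in $E$, while $\mu(H_\sigma)\ge\mu(F_\kappa)-O(\sigma)-o(1)$. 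Smoothing automatically discards the offending thin pieces. Only then does one run an arc-structure argument --- the paper uses the piecewise-linear slope count for $s\mapsto\mu(K+[0,s])$, which is your gap identity in different notation --- to conclude that $H_\sigma$, and hence $F_\kappa$, is within $O(\sqrt\sigma)+o(1)$ of an arc, and finally sends $\sigma\to0$ slowly. So your endgame is sound; what is missing is the smoothing step (or some substitute device) that legitimately upgrades the $+_\delta$ hypothesis to a full-sumset bound.
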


\begin{proof}  By Lemma \ref{crit-equiv}, we may assume without loss of generality that $B = \phi^{-1}(J)$; also, by translation invariance we may assume that $J = [0,t] \text{ mod } \Z$ for some $t$ with
\begin{equation}\label{muat}
\mu_G(A), t, 1 - \mu_G(A) - t \gg 1.
\end{equation}
As $(A,B)$ is a critical pair, there exists an infinitesimal $\delta>0$ such that the set $C \coloneqq A +_\delta B$ has measure
$$\mu_G(C) = \mu_G(A) + \mu_G(B) + o(1) = \mu_G(A) + t + o(1).$$

The set $B$ is invariant with respect to translations in the kernel of $\phi$, so $C$ is similarly invariant, thus $C = \phi^{-1}(E)$ for some measurable subset $E$ of $\R/\Z$ with 
\begin{equation}\label{met}
\mu_{\R/\Z}(E) = \mu_G(C) = \mu_G(A) + t + o(1).
\end{equation}

The pullback map $\phi^*: g \mapsto g \circ \phi$ is an isometry from $L^2(\R/\Z, \mu_{\R/\Z})$ to $L^2(G, \mu_G)$.  Taking adjoints, we obtain a pushforward map $\phi_*: L^2( G, \mu_G ) \mapsto L^2(\R/\Z, \mu_{\R/\Z})$ such that
$$ \int_{\R/\Z} \phi_*(f)(\alpha) g(\alpha)\ d\mu_{\R/\Z}(\alpha) = \int_G f(x) g(\phi(x))\ d\mu_G(x)$$
for all $f \in L^2(G,\mu_G)$ and $g \in L^2(\R/\Z, \mu_{\R/\Z})$.  It is easy to see that the map $\phi_*$ is monotone with $\phi_*(1)=1$ (up to almost everywhere equivalence).  If we write $f_A \coloneqq \phi_* 1_A$ for the pushforward of $1_A$, then $f_A$ takes values in $[0,1]$ (after modifying on a set of measure zero if necessary), and we have
\begin{equation}\label{rza}
 \int_{\R/\Z} f_A\ d\mu_{\R/\Z} = \int_G 1_A\ d\mu_G = \mu_G(A).
\end{equation}
Also, since $1_A * 1_B = 1_A * 1_{\phi^{-1}(J)}$ is bounded by $o(1)$ outside of $C = \phi^{-1}(E)$, we see that $f_A * 1_J$ is bounded almost everywhere by $o(1)$ outside of $E$, thus
\begin{equation}\label{rze}
 \int_{(\R/\Z) \backslash E} f_A * 1_{[0,t] \text{ mod } \Z}\ d\mu_{\R/\Z} = o(1).
\end{equation}
Let $\lambda > 0$ be any fixed parameter, and let $F_\lambda \subset \R/\Z$ denote the set $F_\lambda \coloneqq \{ f_A \geq \lambda \}$, then we have
$$ \int_{(\R/\Z) \backslash E} 1_{F_\lambda} * 1_{[0,t] \text{ mod } \Z}\ d\mu_{\R/\Z} = o(1).$$
From Markov's inequality, we conclude that for any fixed $\eps>0$, all but $o(1)$ in measure of the set $F_\lambda +_\eps J$ is contained in $E$, thus
$$ \mu_{\R/\Z}( F_\lambda +_\eps J ) \leq \mu_{\R/\Z}(E) + o(1) = \mu_G(A) + t + o(1).$$
On the other hand, from Corollary \ref{kemp-cor} we have
$$ \mu_{\R/\Z}( F_\lambda +_\eps J ) \geq \min( \mu_{\R/\Z}(F_\lambda) + t, 1 ) - 2 \sqrt{\eps};$$
combining the two bounds and sending $\eps$ to zero, we conclude using \eqref{muat} that
$$ \mu_{\R/\Z}(F_\lambda) \leq \mu_G(A) + o(1).$$
for any fixed $\lambda>0$.  Sending $\lambda$ sufficiently slowly to zero as $n \to \infty$, we conclude on diagonalising that
$$ \mu_{\R/\Z}( F_\kappa ) \leq \mu_G(A) + o(1)$$
for some infinitesimal $\kappa>0$.  Combining this with \eqref{rza} and the pointwise bound $f_A \leq 1_{F_\kappa} + o(1)$, we conclude that
$$ \mu_G(A) = \int_{\R/\Z} f_A\ d\mu_{\R/\Z} \leq \int_{\R/\Z} 1_{F_\kappa}\ d\mu_{\R/\Z} + o(1) \leq \mu_G(A) + o(1)$$
which implies in particular that 
\begin{equation}\label{mfk}
\mu_{\R/\Z}(F_\kappa) = \mu_G(A) + o(1)
\end{equation}
 and
\begin{equation}\label{rzf}
 \int_{\R/\Z} |1_{F_\kappa} - f_A|\ d\mu_{\R/\Z} = o(1).
\end{equation}
Pulling back to $G$, this implies that
$$ \int_{G} |1_{\phi^{-1}(F_\kappa)} - 1_A|\ d\mu_G = o(1),$$
thus $A$ is asymptotically equivalent to $\phi^{-1}(F_\kappa)$.  Thus to establish the proposition, it suffices to show that $F_\kappa$ is asymptotically equivalent to an arc.

From \eqref{rze}, \eqref{rzf} we have
\begin{equation}\label{rzg}
 \int_{\R/\Z \backslash E} 1_{F_\kappa} * 1_{[0,t]\text{ mod }\Z}\ d\mu_{\R/\Z} = o(1).
\end{equation}
This bound can be used to show that partial sumsets of $F_\kappa$ and $[0,t] \text{ mod } \Z$ are mostly contained in $E$.  However, it does not control the full sumset of these two sets.  To get around this difficulty, we ``smooth'' $F_\kappa$ somewhat by replacing it with a modified set $H_\sigma$.  More precisely,
let $0 < \sigma < t$ be a small fixed quantity, and let $H_\sigma \subset \R/\Z$ be the set $H_\sigma \coloneqq F_\kappa +_{\sigma^2} ([0,\sigma] \text{ mod }\Z)$.  Observe that if $x \in H_\sigma$, then one has the pointwise lower bound $1_{F_\kappa} * 1_{[0,t]\text{ mod }\Z} \geq \sigma^2$ on the arc $x + ([0, t-\sigma] \text{ mod } \Z)$; thus
$$1_{F_\kappa} * 1_{[0,t]\text{ mod }\Z} \geq \sigma^2 1_{H_\sigma + [0, t-\sigma] \text{ mod } \Z}.$$
From this, \eqref{rzg} and Markov's inequality we conclude that all but $o(1)$ in measure of $H_\sigma + ([0, t-\sigma] \text{ mod } \Z)$ lies in $E$.  By \eqref{met}, we conclude that
$$ \mu_{\R/\Z}( H_\sigma + ([0, t-\sigma] \text{ mod } \Z)) \leq \mu_G(A) + t + o(1).$$
On the other hand, from  Corollary \ref{kemp-cor} and \eqref{mfk}, \eqref{muat} one has
$$ \mu_{\R/\Z}(H_\sigma) \geq \min( \mu_{\R/\Z}(F_\kappa) + t - \sigma, 1 ) - 2 \sigma \geq \mu_G(A) - 3 \sigma + o(1).$$
The situation here is reminiscent of that for which the inverse theorem for the Brunn-Minkowski inequality (see \cite{figalli}, \cite{christ}, \cite{christ2}), can be applied, but we are on the circle $\R/\Z$ instead of the line $\R$.  However, as one of the sets involved is an arc, we can use the following elementary argument. As $H_\sigma$ is measurable, it is asymptotically equivalent to some finite union $K$ of arcs.  For each $0 \leq s \leq t-\sigma$, the set $K_s \coloneqq K + ([0,s] \text{ mod } \Z)$ is also a finite union of arcs, with
$$ \mu_G(A) - 3\sigma + o(1) \leq \mu_{\R/\Z}(K_0) \leq \mu_{\R/\Z}(K_{t-\sigma}) \leq \mu_G(A) + t + o(1).$$
It is easy to see that the function $s \mapsto \mu_G(K_s)$ is continuous and piecewise linear, with all slopes being positive integers.  From the fundamental theorem of calculus, we thus see that the slope must in fact equal $1$ for all $s$ in $[0,t-\sigma]$ outside of a set of measure at most $4\sigma+o(1)$.  The slope can only equal one when $K_s$ is an arc, thus $K_s$ must be an arc for some $s \leq 4\sigma+o(1)$.  From the fundamental theorem of calculus again, we have 
$$\mu_{\R/\Z}(K_s) \leq \mu_{\R/\Z}(K_{t-\sigma}) - (t-\sigma-s) \leq \mu_G(A) + 5\sigma+o(1)$$
and thus $K = K_0$ differs by at most $O(\sigma)+o(1)$ in measure from an arc of length $\mu_G(A) + O(\sigma) + o(1)$, where we adopt the convention that implied constants in asymptotic notation are independent of $\sigma$.  This implies that $H_\sigma$ differs by $O(\sigma)+o(1)$ in measure from an arc $I$ of length $\mu_G(A) + O(\sigma) + o(1)$.  Since $1_{F_\kappa} * 1_{[0,\sigma] \text{ mod }\Z}$ is bounded pointwise by $\sigma$, and by $\sigma^2$ outside of $H_\sigma$, we conclude that
$$ \int_{\R/\Z \backslash I} 1_{F_\kappa} * 1_{[0,\sigma] \text{ mod }\Z}\ d\mu_{\R/\Z} \ll \sigma^2 + o(1)$$
which by Fubini's theorem implies that $F_\kappa$ has at most $O(\sigma) + o(1)$ in measure outside of the arc $I - [0,\sigma]$, which has measure $\mu_G(A) + O(\sigma) + o(1)$.  From \eqref{mfk} we conclude that $F_\kappa$ differs from an arc of measure $\mu_G(A)$ by at most $O(\sigma)+o(1)$ in measure.  Sending $\sigma$ to zero sufficiently slowly as $n \to \infty$, we obtain the claim.
\end{proof}

If $(A,B)$ is a critical pair, define an \emph{almost sumset} $A +_{o(1)} B$ of the pair to be any set of the form $A +_\delta B$, where $\delta>0$ is an infinitesimal obeying \eqref{muab-2}.  Clearly at least one almost sumset exists.  The almost sumset is not unique; however, if $\delta >\delta' > 0$ are two infinitesimals obeying \eqref{muab-2}, then we certainly have
$$ A +_\delta B \supset A +_{\delta'} B$$
and hence from Corollary \ref{kemp-cor}
$$ \mu_G(A) + \mu_G(B) + o(1) \geq \mu_G( A +_\delta B ) \geq \mu_G( A +_{\delta'} B ) \geq \mu_G(A) + \mu_G(B) - o(1)$$
and hence $A +_\delta B$ and $A +_{\delta'} B$ are asymptotically equivalent.  Thus, the almost sumset $A +_{o(1)} B$ is well defined up to asymptotic equivalence.  As a first approximation, the reader may think of $A +_{o(1)} B$ as being the full sumset $A+B$; however, we do not use the latter set for technical reasons (it is not stable with respect to asymptotic equivalence).

We now observe the following submodularity property, related to \eqref{submod}:

\begin{lemma}[Submodularity]\label{submod-lemma}  Suppose that $(A,B_1), (A,B_2)$ are critical pairs with 
$$\mu_G(B_1 \cap B_2), 1 - \mu_G(A) - \mu_G(B_1 \cup B_2) \gg 1.$$
Then $(A, B_1 \cap B_2)$ and $(A, B_1 \cup B_2)$ are also critical pairs.
\end{lemma}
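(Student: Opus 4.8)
The plan is to reduce the statement to a submodularity property of the \emph{truncated convolution integrals} $\int_G \min(1_A*1_B,t)\,d\mu_G$ rather than of the partial sumsets themselves. The key observation is that the operation $(A,B)\mapsto A+_\delta B$ is \emph{not} submodular: from the inclusion-exclusion identity $1_A*1_{B_1}+1_A*1_{B_2}=1_A*1_{B_1\cap B_2}+1_A*1_{B_1\cup B_2}$ one obtains only the inclusions $A+_\delta(B_1\cap B_2)\subset(A+_\delta B_1)\cap(A+_\delta B_2)$ and $(A+_\delta B_1)\cup(A+_\delta B_2)\subset A+_\delta(B_1\cup B_2)$, and the latter can fail to be an equality by an amount that is not evidently $o(1)$ in measure. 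However, since $x\mapsto\min(x,t)$ is concave, the majorization argument underlying \eqref{a12b} gives the \emph{pointwise} inequality
$$ \min(1_A*1_{B_1},t)+\min(1_A*1_{B_2},t)\ \ge\ \min(1_A*1_{B_1\cap B_2},t)+\min(1_A*1_{B_1\cup B_2},t) $$
for every $t>0$, which is submodular in the required sense.

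First I would record a characterisation of critical pairs in terms of these integrals (assuming \eqref{muab} throughout). Forward direction: if $(A,B)$ is a critical pair, pick an infinitesimal $\delta$ with $\mu_G(A+_\delta B)\le\mu_G(A)+\mu_G(B)+o(1)$ and split $G$ into $\{1_A*1_B\ge\delta\}$ (of measure $\le\mu_G(A)+\mu_G(B)+o(1)$) and its complement (on which $1_A*1_B<\delta=o(1)$); this yields, for every fixed $t>0$,
$$ \int_G\min(1_A*1_B,t)\,d\mu_G\ \le\ t\,\mu_G(A)+t\,\mu_G(B)+o(1). $$
Converse: if \eqref{muab} holds and $\int_G\min(1_A*1_B,t)\,d\mu_G\le t\,\mu_G(A)+t\,\mu_G(B)+t^2+o(1)$ for every sufficiently small fixed $t>0$, then Markov's inequality gives $\mu_G(\{1_A*1_B\ge t\})\le\mu_G(A)+\mu_G(B)+t+o(1)$ for each such $t$, and a routine diagonalisation (letting $t=t_n\to0$ slowly enough) produces an infinitesimal $\delta$ witnessing that $(A,B)$ is a critical pair. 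Theorem \ref{ruzsa-thm}, together with $\mu_G(A)+\mu_G(B)<1$, supplies the matching lower bound $\int_G\min(1_A*1_B,t)\,d\mu_G\ge t\,\mu_G(A)+t\,\mu_G(B)-t^2$, which explains why $t^2$ is the natural amount of slack.

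Then I would run the main argument. First, the hypotheses $\mu_G(B_1\cap B_2)\gg1$ and $1-\mu_G(A)-\mu_G(B_1\cup B_2)\gg1$, combined with $\mu_G(A),\mu_G(B_1\cup B_2)\gg1$ (the latter from criticality of $(A,B_1)$) and the nesting $B_1\cap B_2\subset B_i\subset B_1\cup B_2$, verify \eqref{muab} for both $(A,B_1\cap B_2)$ and $(A,B_1\cup B_2)$; in particular $\mu_G(A)+\mu_G(B_1\cup B_2)<1$. Now integrate the pointwise inequality over $G$, bound the two terms $\int_G\min(1_A*1_{B_i},t)$ from above via the forward characterisation (using $\mu_G(B_1)+\mu_G(B_2)=\mu_G(B_1\cap B_2)+\mu_G(B_1\cup B_2)$), and bound $\int_G\min(1_A*1_{B_1\cup B_2},t)$ from below via Theorem \ref{ruzsa-thm}; rearranging isolates
$$ \int_G\min(1_A*1_{B_1\cap B_2},t)\,d\mu_G\ \le\ t\,\mu_G(A)+t\,\mu_G(B_1\cap B_2)+t^2+o(1) $$
for every sufficiently small fixed $t>0$, so the converse characterisation shows $(A,B_1\cap B_2)$ is a critical pair. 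Symmetrically, bounding $\int_G\min(1_A*1_{B_1\cap B_2},t)$ from below by Theorem \ref{ruzsa-thm} instead yields the analogous estimate for $\int_G\min(1_A*1_{B_1\cup B_2},t)$, so $(A,B_1\cup B_2)$ is also a critical pair.

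The main obstacle is exactly the one flagged at the outset: because $+_\delta$ is not submodular one cannot argue directly with the sets $A+_\delta B_i$, and must instead pass to the truncated convolution integrals, where concavity of $\min(\cdot,t)$ restores submodularity. The only other mild subtlety is the diagonalisation needed to convert the family of estimates ``for each fixed $t$'' back into a single infinitesimal $\delta$, which is routine within the cheap-nonstandard framework of this section.
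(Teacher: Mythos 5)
Your proposal is correct and follows essentially the same route as the paper: both reduce the problem to the submodularity of $\int_G \min(1_A * 1_{B}, t)\,d\mu_G$ via the pointwise concavity inequality \eqref{a12b}, and both use Theorem \ref{ruzsa-thm} to supply the matching lower bounds that force near-equality. The only (cosmetic) difference is that the paper runs the argument at a single infinitesimal threshold $\sqrt{\delta}$ and reads off the partial-sumset bound directly, whereas you work at every fixed threshold $t$, package the two directions as a characterisation of critical pairs, and diagonalise in $t$ at the end; the underlying mechanism is identical.
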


The reader may wish to check that the lemma is true in the case when $A,B_1,B_2$ are parallel Bohr sets.  Of course, once Theorem \ref{inv-3} is proven we know that this is essentially the only case in which the hypotheses of the lemma apply, but we cannot use this fact directly as this would be circular.

\begin{proof}  The properties \eqref{muab} for $(A, B_1 \cap B_2)$ and $(A, B_1 \cup B_2)$ are clear from construction, so it suffices to show that
\eqref{muab-2} also holds for these pairs.

By hypothesis, we can find an infinitesimal $\delta>0$ such that
$$ \mu_G( A +_\delta B_1 ) \leq \mu_G(A) + \mu_G(B_1) + o(1)$$
and
$$  \mu_G( A +_\delta B_2 ) \leq \mu_G(A) + \mu_G(B_2) + o(1)$$
(note that we can use the same $\delta$ for both critical pairs $(A,B_1), (A,B_2)$ by increasing one of the $\delta$'s as necessary).  In particular, from the pointwise bound
$$ \min( 1_A * 1_{B_1}, \sqrt{\delta} ) \leq \sqrt{\delta} 1_{A +_\delta B_1} + \delta$$
one has
$$ \int_G \min( 1_A * 1_{B_1}, \sqrt{\delta} )\ d\mu_G \leq \sqrt{\delta}( \mu_G(A) + \mu_G(B_1) + o(1) ) + \delta = \sqrt{\delta}( \mu_G(A) + \mu_G(B_1) + o(1) )$$
and similarly
$$ \int_G \min( 1_A * 1_{B_2}, \sqrt{\delta} )\ d\mu_G \leq \sqrt{\delta}( \mu_G(A) + \mu_G(B_2) + o(1) ).$$
Summing and applying \eqref{a12b} (with the obvious relabeling) together with the inclusion-exclusion identity $\mu_G(B_1)+\mu_G(B_2) = \mu_G(B_1 \cap B_2) + \mu_G(B_1 \cup B_2)$, we conclude that
\begin{align*}
& \int_G \min( 1_A * 1_{B_1 \cap B_2}, \sqrt{\delta} )\ d\mu_G + \int_G \min( 1_A * 1_{B_1 \cup B_2}, \sqrt{\delta} )\ d\mu_G \\
&\quad \leq \sqrt{\delta}( \mu_G(A) + \mu_G(B_1 \cap B_2) + \mu_G(A) + \mu_G(B_1 \cup B_2) + o(1) ).
\end{align*}
On the other hand, from Theorem \ref{ruzsa-thm} we have
$$ \int_G \min( 1_A * 1_{B_1 \cap B_2}, \sqrt{\delta} )\ d\mu_G \geq \sqrt{\delta}( \mu_G(A) + \mu_G(B_1 \cap B_2) - o(1))$$
and similarly
$$ \int_G \min( 1_A * 1_{B_1 \cup B_2}, \sqrt{\delta} )\ d\mu_G \geq \sqrt{\delta}( \mu_G(A) + \mu_G(B_1 \cup B_2) - o(1))$$
Thus we in fact have
$$ \int_G \min( 1_A * 1_{B_1 \cap B_2}, \sqrt{\delta} )\ d\mu_G = \sqrt{\delta}( \mu_G(A) + \mu_G(B_1 \cap B_2) + o(1))$$
and
$$ \int_G \min( 1_A * 1_{B_1 \cup B_2}, \sqrt{\delta} )\ d\mu_G = \sqrt{\delta}( \mu_G(A) + \mu_G(B_1 \cup B_2) + o(1))$$
In particular, we have
$$ \mu_G(A +_{\sqrt{\delta}} (B_1 \cap B_2)) \leq \mu_G(A) + \mu_G(B_1 \cap B_2) + o(1)$$
and
$$ \mu_G(A +_{\sqrt{\delta}} (B_1 \cup B_2)) \leq \mu_G(A) + \mu_G(B_1 \cup B_2) + o(1)$$
We thus obtain \eqref{muab} for $(A,B_1 \cap B_2)$ and $(A,B_1 \cup B_2)$ as desired (with $\delta$ replaced by $\sqrt{\delta}$).
\end{proof}

We can iterate this lemma to obtain

\begin{corollary}\label{iter}  Let $(A,B)$ be a critical pair, and let $\delta>0$ be fixed.  Then there exists a measurable set $C$ with $\mu_G(C) \leq \delta$ such that $(A,C)$ is a critical pair.
\end{corollary}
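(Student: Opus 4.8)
The plan is to repeatedly shrink $B$: starting from the critical pair $(A,B)$, we produce a sequence of sets $B = B^{(0)}, B^{(1)}, B^{(2)}, \dots$, each forming a critical pair with $A$, with $\mu_G(B^{(k)})$ strictly decreasing at each ``active'' step, until after a bounded number of steps the measure has dropped to at most $\delta$; the resulting set is the desired $C$. The mechanism for one step combines Lemma \ref{cont} with the submodularity Lemma \ref{submod-lemma}. Given a critical pair $(A,B')$ and a target value $t$ with $\mu_G(B')^2 \le t \le \mu_G(B')$, Lemma \ref{cont} supplies $x \in G$ with $\mu_G(B' \cap (B'+x)) = t$. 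Since $(A,B')$ and $(A,B'+x)$ are both critical pairs --- the latter by translation invariance --- Lemma \ref{submod-lemma} shows that $(A, B' \cap (B'+x))$ is a critical pair as well, provided $\mu_G(B' \cap (B'+x)) \gg 1$ and $1 - \mu_G(A) - \mu_G(B' \cup (B'+x)) \gg 1$; here $\mu_G(B' \cup (B'+x)) = 2\mu_G(B') - t$, so the second condition reads $1 - \mu_G(A) - 2\mu_G(B') + t \gg 1$.

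Fix $\eps_0 \in (0, 1/3]$ with $\mu_G(A), \mu_G(B), 1 - \mu_G(A) - \mu_G(B) \ge \eps_0$ for all large $n$, as permitted by \eqref{muab}. I would set $B^{(0)} := B$ and, given $B^{(k)}$, define $B^{(k+1)} := B^{(k)}$ if $\mu_G(B^{(k)}) \le \delta$, and otherwise perform one step as above with $B' := B^{(k)}$ and
$$ t \;:=\; \mu_G(B^{(k)}) - \tfrac12 \min\bigl(\eps_0,\, \mu_G(B^{(k)})\bigr). $$
Since $\mu_G(B^{(k)})$ is nonincreasing and $\mu_G(B) \le 1 - \mu_G(A) - \eps_0$, one has $1 - \mu_G(A) - \mu_G(B^{(k)}) \ge \eps_0$ throughout, and $\mu_G(A) \ge \eps_0$ as $A$ is unchanged; in particular $\mu_G(B^{(k)}) \in [\eps_0, 1-\eps_0]$ whenever $\mu_G(B^{(k)}) \ge \eps_0$. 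One then checks the hypotheses of Lemmas \ref{cont} and \ref{submod-lemma} at each active step: the inequality $t \ge \mu_G(B^{(k)})^2$ holds because $s - \tfrac12\eps_0 \ge s^2$ for $s \in [\eps_0, 1-\eps_0]$ (as $\eps_0 \le \tfrac12$), and because $\tfrac{s}{2} \ge s^2$ for $s \le \tfrac12$; one has $t \ge \min(\tfrac{\eps_0}{2}, \tfrac{\delta}{2}) \gg 1$ because an active step only occurs when $\mu_G(B^{(k)}) > \delta$; and
$$ 1 - \mu_G(A) - 2\mu_G(B^{(k)}) + t \;=\; \bigl(1 - \mu_G(A) - \mu_G(B^{(k)})\bigr) - \tfrac12\min\bigl(\eps_0, \mu_G(B^{(k)})\bigr) \;\ge\; \tfrac{\eps_0}{2} \;\gg\; 1. $$
So each active step yields a critical pair $(A, B^{(k+1)})$ with $\mu_G(B^{(k+1)}) = t < \mu_G(B^{(k)})$.

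Counting steps: while $\mu_G(B^{(k)}) \ge \eps_0$ each active step decreases the measure by exactly $\eps_0/2$, so after at most $\lceil 2/\eps_0 \rceil$ active steps it drops below $\eps_0$; thereafter each active step halves it, so at most $\lceil \log_2(\eps_0/\delta)\rceil$ further active steps bring it to $\le \delta$, after which the construction freezes. Hence there is a fixed $K = K(\eps_0, \delta)$, not depending on $n$, with $\mu_G(B^{(K)}) \le \delta$; set $C := B^{(K)}$. As the argument invokes Lemma \ref{submod-lemma} only $K$ times, a fixed number, $(A,C)$ is again a critical pair (the various infinitesimals produced along the way, each obtained from the previous by a bounded number of operations such as $\delta \mapsto \sqrt\delta$, remain infinitesimal), and $\mu_G(C) \le \delta$, as required. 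For those $n$, if any, with $\mu_G(B) \le \delta$ the construction freezes immediately, which is harmless.

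I expect the main obstacle to be the competition within the hypotheses of Lemma \ref{submod-lemma}. One cannot pass from $B'$ to a much smaller set in a single step --- for instance all the way down to $\mu_G(B')^2$ --- because then $B' \cup (B'+x)$ would become too large for the requirement $1 - \mu_G(A) - \mu_G(B' \cup (B'+x)) \gg 1$, unless $\mu_G(B')$ is already small. This is precisely why the target $t$ above is chosen in two regimes --- cautious additive decrements of $\eps_0/2$ while $\mu_G(B^{(k)}) \ge \eps_0$, then geometric halving once it is below $\eps_0$ --- and why it is essential that the ``slack'' $1 - \mu_G(A) - \mu_G(B^{(k)})$ never deteriorates along the iteration: it in fact only improves, since $\mu_G(B^{(k)})$ decreases while $\mu_G(A)$ is fixed, which keeps all the implied constants uniform over the bounded number of steps.
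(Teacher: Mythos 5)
Your proof is correct and takes essentially the same approach as the paper: repeatedly use Lemma \ref{cont} to find $x$ with $\mu_G(B' \cap (B'+x))$ equal to a prescribed value, invoke Lemma \ref{submod-lemma} to conclude that $(A, B' \cap (B'+x))$ is again a critical pair, and iterate a fixed number of times until the measure drops below $\delta$. The paper organizes the iteration via an induction on a fixed threshold and uses the schedule $t = \max(\mu_G(C)^2, \mu_G(C) - c/2)$ rather than your additive-then-halving schedule, but this is just a difference in bookkeeping.
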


\begin{proof}  By hypothesis, there exists a fixed $c>0$ such that
$$ \mu_G(A), \mu_G(B), 1 - \mu_G(A) - \mu_G(B) \geq c$$
for $n$ large enough.  For the given $A,B$ and any fixed $\delta>0$, let $P(\delta)$ denote the assertion that there exists $C$ with $\mu_G(C) \leq \min(\mu_G(B),\delta)$ such that $(A,C)$ is a critical pair.  Clearly $P(\delta)$ holds for any $\delta \geq 1-c$, as one can simply take $C = B$.  Now suppose that $P(\delta)$ holds for some $\delta \leq 1-c$, thus there exists $C$ with $\mu_G(C) \leq \delta$ and $(A,C)$ a critical pair.  By Lemma \ref{cont}, one can find $x \in G$ such that $\mu_G( C \cap (x+C) ) = \max( \mu_G(C)^2, \mu_G(C) - c/2 )$.  Observe that
$$ 1 - \mu_G(A) - \mu_G(C \cup (x+C)) \geq 1 - \mu_G(A) - \mu_G(C) - c/2 \geq 1 - \mu_G(A) - \mu_G(B) - c/2 \geq c/2.$$
As $(A,C)$ and $(A,x+C)$ are both critical pairs, we conclude from Lemma \ref{submod-lemma} that $(A, C \cap (x+C))$ is also a critical pair.  Thus $P(\delta')$ holds for all $\delta' \geq \max(\delta^2, \delta-c/2)$.  Iterating this, we conclude that $P(\delta)$ holds for all fixed $\delta>0$, giving the claim.
\end{proof}

As a consequence of this corollary and Proposition \ref{p1}, we may now reduce Theorem \ref{inv-3} to the following variant:

\begin{theorem}[Inverse theorem, reduced form]\label{inv-4}  Let $K$ be a sufficiently large absolute constant.  Suppose that $(A,C)$ is a critical pair such that
\begin{equation}\label{mu}
 \mu_G(A) + K \mu_G(C) < 1
\end{equation}
and
\begin{equation}\label{mu2}
 \mu_G(A) \geq K \mu_G(C).
\end{equation}
Then $C$ is asymptotically equivalent to a Bohr set.
\end{theorem}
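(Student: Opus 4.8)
The plan is to show that $C$ has \emph{linear growth} in the sense that $\mu_G(kC) = k\mu_G(C)+o(1)$ for every fixed $k\ge 1$ (with $kC=C+\dots+C$, interpreted throughout via iterated almost-sumsets so as to stay within the asymptotically stable framework), and then to invoke the inverse sumset theory of Schoen \cite{schoen} and Green--Ruzsa \cite{rect}, \cite{green}. Once linear growth is established, the lower bound $\mu_G(C)\gg 1$ from \eqref{muab} places $C$ in exactly the setting of those results, which force $C$ to be asymptotically equivalent to a Bohr set, as required; I return to this deduction at the end. To obtain linear growth I will exploit the closure properties of the class of critical pairs: it is symmetric, translation-invariant, stable under asymptotic equivalence (Lemma \ref{crit-equiv}), and submodular (Lemma \ref{submod-lemma}).

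The decisive extra ingredient is an \emph{associativity lemma}: if $(A,B)$ and $(A+_{o(1)}B,\,C)$ are critical pairs, then $(B,C)$ is a critical pair and so is $(A,\,B+_{o(1)}C)$. I would prove this using the associativity of convolution, $(1_A*1_B)*1_C=1_A*(1_B*1_C)$, to sandwich the quantity $\int_G\min(1_A*1_B*1_C,t)\,d\mu_G$ for a suitable infinitesimal $t$. For the upper bound one uses the pointwise estimate $1_A*1_B\le \delta+\mu_G(B)\,1_{A+_\delta B}$ (for an infinitesimal $\delta$ witnessing criticality of $(A,B)$) together with the criticality of $(A+_{o(1)}B,C)$ and the argument in the proof of Lemma \ref{submod-lemma}; for the lower bound one uses $1_B*1_C\ge\eta\,1_{B+_\eta C}$ (for a fixed small $\eta$) and Theorem \ref{ruzsa-thm} applied to the pair $(A,\,B+_\eta C)$, together with Corollary \ref{kemp-cor} to control $\mu_G(B+_\eta C)$, and then lets $\eta$ go to zero slowly. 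The resulting near-equality, read back through Theorem \ref{ruzsa-thm} and Corollary \ref{kemp-cor}, forces $\mu_G(B+_{o(1)}C)=\mu_G(B)+\mu_G(C)+o(1)$ and $\mu_G(A+_{o(1)}(B+_{o(1)}C))=\mu_G(A)+\mu_G(B)+\mu_G(C)+o(1)$, which is exactly the criticality of $(B,C)$ and of $(A,\,B+_{o(1)}C)$.

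Granting the associativity lemma, the bootstrap runs as follows. From the hypothesis that $(A,C)$ is a critical pair, a further closure argument --- combining submodularity (Lemma \ref{submod-lemma}) in the first slot with translation invariance, in the spirit of the proof of Corollary \ref{iter}, and using the room provided by \eqref{mu2} which makes $A$ much larger than $C$ --- should show that $(A+_{o(1)}C,\,C)$ is again a critical pair. Feeding $(A,C)$ and $(A+_{o(1)}C,C)$ into the associativity lemma with $B=C$ then yields that $(C,C)$ is a critical pair, whence also $\mu_G(2C)=2\mu_G(C)+o(1)$; iterating the associativity lemma with $(A,B,C)$ replaced by $(C,C,(k-1)C)$ and inducting on $k$ shows that $(C,kC)$ is a critical pair, that is $\mu_G(kC)=k\mu_G(C)+o(1)$, for every fixed $k$. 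Throughout, the two hypotheses \eqref{mu} and \eqref{mu2} (with the absolute constant $K$ taken large) are precisely what keeps every set produced of measure $\gg 1$ and at most $\mu_G(A)+O(\mu_G(C))$, hence bounded away from $1$, so that condition \eqref{muab} is never violated.

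Finally, to convert linear growth into Bohr structure one appeals to the inverse theory for sets with minimal iterated sumset growth. Since $G$ is connected, its Pontryagin dual is torsion-free and $G$ surjects continuously onto $\R/\Z$; a Freiman-modelling argument of Green--Ruzsa type then replaces $C$ by a subset of $\Z$ of comparable relative density sharing the additive structure of $C$ up to order a large fixed constant $k_0$, the linear-growth bound makes this model a set with very small $k_0$-fold sumset, Freiman-type results \cite{freiman} (with the refinement of Schoen \cite{schoen}) force the model to be close to an arithmetic progression, and pulling this back to $G$ exhibits $C$ as asymptotically equivalent to $\phi^{-1}(I)$ for a continuous surjection $\phi:G\to\R/\Z$ and an arc $I$. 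I expect the main obstacle to lie in the bootstrap of the third paragraph --- specifically, establishing that $(A+_{o(1)}C,C)$ is a critical pair and then iterating the associativity lemma while keeping all the measures within the constraints \eqref{muab} --- together with verifying that the external inverse sumset results tolerate the $o(1)$ errors and the passage to almost-sumsets; the associativity lemma itself also requires some care in propagating the infinitesimal parameters through the double application of Theorem \ref{ruzsa-thm}.
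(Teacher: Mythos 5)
Your first two paragraphs match the paper's strategy quite closely.  The ``associativity lemma'' you state is exactly the informal principle from the introduction, and the precise version the paper proves (Proposition \ref{muto}) takes as input two critical pairs $(A,B_1),(A,B_2)$ and outputs that $(B_1,B_2)$, $(A+_{o(1)}B_1,B_2)$, $(A,B_1+_{o(1)}B_2)$ are all critical; applying it with $B_1=B_2=C$ and iterating yields $(C,C)$, $(C_2,C_k)$ and linear growth $\mu_G(C_k)=k\mu_G(C)+o(1)$ for $k$ up to $O(K)$, under the hypotheses \eqref{mu}, \eqref{mu2}.  Your sketch of how to prove the associativity lemma (triple convolutions, two applications of Theorem \ref{ruzsa-thm}, propagating infinitesimals) captures the paper's estimates for the last two conclusions of Proposition \ref{muto}; the one extra device the paper needs, and which your outline elides, is a probabilistic approximation of $A+_{o(1)}B_1$ by $A+X_m$ for a finite set $X_m\subset B_1$ so that submodularity can be applied finitely many times in the first slot.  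Your proposed ``further closure argument'' for $(A+_{o(1)}C,C)$ is in fact exactly that $X_m$-approximation argument, so there is no separate bootstrap step; it is built into the associativity lemma.

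The genuine gap is the final paragraph.  There is no off-the-shelf inverse theorem in the literature that you can quote to pass from linear growth of $\mu_G(kC)$ (with $\mu_G(C)\gg 1$) to \emph{asymptotic equivalence} with a Bohr set, and the route you sketch has two obstructions.  First, the Freiman--Green--Ruzsa rectification you invoke to model $C$ by a subset of $\Z$ requires the set to be small relative to the ambient group, whereas here $\mu_G(C)$ is bounded below by a positive constant; the paper therefore never leaves $G$ and instead runs Schoen's Fourier argument directly (iterated convolution $1_{C_2}^{*k}$, Plancherel) to locate a nonzero character $\phi\in\hat G$ with $|\hat 1_{C_2}(\phi)|\ge(1-O(\log K/K))\mu_G(C_2)$.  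Second, even granting a model in $\Z$, Freiman-type and Schoen-type theorems produce \emph{containment} in a short progression, not stability (approximate equality up to $o(1)$ in measure); the paper extracts stability via a near-saturation argument for a Riesz--Sobolev-type inequality in the style of Christ, applied to the pushforward $f_{C'}=\phi_*1_{C'}$.  Most importantly, the character $\phi$ produced by the Schoen argument is generically \emph{not} the one defining the Bohr set: one may have $\phi=m\phi'$ for a bounded integer $m>1$, with $C$ close to $(\phi')^{-1}(J')$ rather than $\phi^{-1}(I)$ (the paper's Remark \ref{remo} gives a concrete example).  Detecting and carrying out this quotienting of the character --- identifying the subgroup $H=(C''-C'')\cap\phi^{-1}(0)$ via the local additive-closure property of $C''-C''$ (Lemma \ref{lodo}, Corollary \ref{raz}), showing $\phi^{-1}(0)/H$ is finite of order $m$, proving $\tau=m^{-1}+o(1)$, and lifting $\phi'$ back to $G$ --- occupies the second half of the paper's proof and has no analogue in your outline.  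Without this step, the conclusion that $C$ is close to a Bohr set (as opposed to a preimage of a short union of arcs under $\phi$) does not follow.
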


One can in fact take $K=10^4$ in our arguments, but the exact value of $K$ will not be of importance to us.  

We now claim that Theorem \ref{inv-3} follows from Theorem \ref{inv-4}.  Indeed, if $(A,B)$ is a critical pair and $K$ is as as in Theorem \ref{inv-4}, then by applying Corollary \ref{iter} with a sufficiently small $\delta$ we may find a critical pair $(A,C)$ obeying \eqref{mu}, \eqref{mu2}.  By Theorem \ref{inv-4}, $C$ is asymptotically equivalent to a Bohr set, which by Proposition \ref{p1} implies that $A$ is asymptotically equivalent to a parallel Bohr set.  But by a second application of Proposition \ref{p1}, we conclude that $B$ is also asymptotically equivalent to a parallel Bohr set, and Theorem \ref{inv-3} follows.

It remains to establish Theorem \ref{inv-4}.  To do this, we first iterate Lemma \ref{submod-lemma} in a different fashion to obtain

\begin{proposition}\label{muto}  Suppose that $(A,B_1), (A,B_2)$ are critical pairs with
\begin{equation}\label{mut}
\mu_G(A) - \mu_G(B_1), 1 - \mu_G(A) - \mu_G(B_1) - \mu_G(B_2) \gg 1.
\end{equation}
Then $(B_1,B_2)$, $(A +_{o(1)} B_1, B_2)$, and $(A, B_1 +_{o(1)} B_2)$ are critical pairs.
\end{proposition}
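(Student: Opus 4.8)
The plan is to prove all three assertions by iterating the submodularity Lemma \ref{submod-lemma} together with translation invariance and the connectedness of $G$ — in the spirit of Corollary \ref{iter}, but now \emph{growing} a set rather than shrinking one — after which the three conclusions are linked by the associativity of convolution. To set up, fix a common infinitesimal $\delta$ witnessing both critical pairs and put $C \coloneqq A +_\delta B_1$; Corollary \ref{kemp-cor}, the critical-pair hypothesis for $(A,B_1)$, and the bound $\mu_G(A)+\mu_G(B_1)<1$ (which follows from \eqref{mut}) give $\mu_G(C) = \mu_G(A)+\mu_G(B_1)+o(1)$ as well as the pointwise pinching $\delta\,1_C \le 1_A * 1_{B_1} \le \delta + 1_C$. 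The size conditions \eqref{muab} for all three of $(B_1,B_2)$, $(C,B_2)$, $(A,B_1+_{o(1)}B_2)$ are immediate from \eqref{mut}, the key point being $1-\mu_G(A)-\mu_G(B_1)-\mu_G(B_2)\gg 1$ (which also bounds $\mu_G(C)+\mu_G(B_2)$ and $\mu_G(A)+\mu_G(B_1+_{o(1)}B_2)$ away from $1$); so in each case only the partial-sumset bound \eqref{muab-2} remains.

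The core step is to show that $(C,B_2)=(A+_{o(1)}B_1,B_2)$ is a critical pair, and this is where Lemma \ref{submod-lemma} gets iterated ``in a different fashion''. Since $(A,B_2)$ is a critical pair, so is every translate $(A+y,B_2)$, hence so is $(B_2,A+y)$ by symmetry. A Fubini--Markov computation, starting from $\int_{G\setminus C} 1_A * 1_{B_1}\,d\mu_G \le \delta$, shows that for all but an $o(1)$-fraction of $y \in B_1$ the translate $A+y$ is contained in $C$ up to a set of measure $o(1)$. I would then realise $C$, up to asymptotic equivalence, as the union of an overlapping chain $A+y_1,\dots,A+y_k$ of such ``good'' translates: using Lemma \ref{cont} to prescribe overlaps and the connectedness of $G$ to steer the chain, one arranges that consecutive translates overlap in a set of measure $\gg 1$ while every initial union $(A+y_1)\cup\cdots\cup(A+y_j)$ stays inside $C$ up to measure $o(1)$ (so in particular has measure at most $\mu_G(A)+\mu_G(B_1)+o(1)$), after which Lemma \ref{submod-lemma} — applied with $B_2$ fixed in the second coordinate, its hypothesis $1-\mu_G(B_2)-\mu_G((A+y_1)\cup\cdots\cup(A+y_j)) \ge 1-\mu_G(A)-\mu_G(B_1)-\mu_G(B_2)-o(1) \gg 1$ being guaranteed by \eqref{mut} at every step — propagates the critical-pair property along the chain; taking the final union and invoking Lemma \ref{crit-equiv} gives $(C,B_2)$ critical. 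With this in hand, the remaining two assertions follow by re-grouping $1_A*1_{B_1}*1_{B_2}$: the inequality $\delta\,1_C \le 1_A * 1_{B_1} \le \delta + 1_C$ lets one transfer between $1_C * 1_{B_2}$ and $1_A * (1_{B_1} * 1_{B_2})$, and Theorem \ref{ruzsa-thm} (used as in the proof of Lemma \ref{submod-lemma} to pinch truncated integrals), together with Corollary \ref{kemp-cor}, then yields \eqref{muab-2} for $(B_1,B_2)$ and thence for $(A, B_1 +_{o(1)} B_2)$; of these $(B_1,B_2)$ should be the most delicate, since removing the $1_A$ factor is not a purely pointwise step.

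The main obstacle is the chain construction: one must simultaneously secure (i) \emph{bounded} length, (ii) consecutive overlaps of measure $\gg 1$, and (iii) every initial union staying inside $C$ up to measure $o(1)$. I expect (i) to come from $\mu_G(A)-\mu_G(B_1)\gg 1$ together with $\mu_G(A)\gg 1$ (the first part keeps $C$ from being much larger than $A$, so that only boundedly many translates of $A$ can cover it, and the second forces each new translate to engulf a definite proportion of the part of $C$ not yet covered), (ii) from a continuity/pigeonhole argument on the connected group $G$ (consecutive translates of $A$ may be taken infinitesimally apart while the running union still grows), and (iii) from the Fubini--Markov fact above; reconciling the three at once is the delicate point, and it is here, rather than in the formal manipulations, that I expect the asymmetric part of \eqref{mut} to be used essentially. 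As a sanity check on the necessity of that asymmetry: bypassing the chain and arguing directly with the triple convolution reduces ``$(C,B_2)$ critical'' to ``$(A+_{o(1)}B_2,B_1)$ critical'' — the same statement with $B_1$ and $B_2$ interchanged — so without $\mu_G(A)-\mu_G(B_1)\gg 1$ the shortcut is circular.
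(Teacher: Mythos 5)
Your high-level decomposition matches the paper's: first establish that $(C,B_2)=(A+_{o(1)}B_1,B_2)$ is a critical pair by approximating $C$ with finitely many translates $A+y_1,\dots,A+y_k$, $y_j\in B_1$, and iterating Lemma~\ref{submod-lemma}; then obtain $(B_1,B_2)$ and $(A,B_1+_{o(1)}B_2)$ from the first conclusion by re-grouping $1_A*1_{B_1}*1_{B_2}$ with the pointwise pinch $\delta 1_C\le 1_A*1_{B_1}\le\delta+1_C$, an application of Markov, and two applications of Corollary~\ref{kemp-cor}. (One small misattribution in the second half: once one has $\mu_G(A+_\sigma (B_1+_\sigma B_2))\le\mu_G(A)+\mu_G(B_1)+\mu_G(B_2)+o(1)$, the two Kneser lower bounds from Corollary~\ref{kemp-cor} together with \eqref{mut} pin down \emph{both} equalities at once, so $(B_1,B_2)$ is not ``the most delicate''; they drop out together.) Where you genuinely diverge from the paper is in how the translates $y_1,\dots,y_k$ are produced, and this is where the gap is.

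Your chain construction does not work as described, for three reasons. (a) Bounded $k$ is incompatible with asymptotic equivalence: your own ``engulf a definite proportion'' heuristic gives $\mu_G(C\setminus U_k)\ll(1-c)^k\mu_G(C)$, which is $\gg 1$ for any fixed $k$; you would have to send $k\to\infty$ slowly, so the advertised boundedness evaporates. (b) Lemma~\ref{cont} produces a translate $x$ anywhere in $G$, but your $y_j$ must lie in the ``good'' subset of $B_1$ or else $A+y_j$ will not sit inside $C$; these are competing constraints and nothing in the proposal reconciles them. (c) Most seriously, a greedy or averaging choice of the next translate breaks down near the measure-theoretic boundary of $C=A+_\delta B_1$: for $x\in C$ with $1_A*1_{B_1}(x)$ only barely above the infinitesimal threshold $\delta$, the set of $y\in B_1$ with $x\in A+y$ has measure as small as $\delta$, so a single deterministically chosen $y$ cannot be guaranteed to capture a definite fraction of what remains. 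The paper circumvents all three issues by first comparing $A+_\delta B_1$ with $A+_{1/m}B_1$ for a \emph{fixed, non-infinitesimal} threshold $1/m$ (shedding a set of measure $O(1/\sqrt m)$), then sampling $m^2$ points independently and uniformly from $B_1$ to form $X_m$: a coupon-collector computation gives $\mu_G((A+_{1/m}B_1)\setminus(A+X_m))\ll e^{-m}$, the pairwise overlaps $\mu_G((A+x)\cap(A+x'))\gg 1$ for $x,x'\in X_m$ follow from $2\mu_G(A)-\mu_G(A+X_m)\gg 1$ (this is where $\mu_G(A)-\mu_G(B_1)\gg 1$ enters — for overlaps, not for bounding $k$), Lemma~\ref{submod-lemma} is iterated at most $m^2$ times, and only then is $m$ diagonalised to infinity. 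Without something playing the role of the probabilistic sampling and the intermediate threshold $1/m$, the chain construction cannot be made to close.
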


Recall that $A +_{o(1)} B_1$ and $B_1 +_{o(1)} B_2$ are only defined up to asymptotic equivalence (with the latter only existing because $(B_1,B_2)$ is a critical pair), but this is of no concern here thanks to Lemma \ref{crit-equiv}.  As before, the reader may verify that this claim is easily checked in the case that $A,B_1,B_2$ are parallel Bohr sets.

\begin{proof}  By definition, we can write $A +_{o(1)} B_1$ as $A +_\delta B_1$ for some infinitesimal $\delta>0$ with
\begin{equation}\label{mad}
 \mu_G( A +_\delta B_1 ) = \mu_G(A) + \mu_G(B_1) + o(1).
\end{equation}
Now let $m$ be a fixed large natural number, thus $\delta = \delta_n \leq 1/m$ for $n$ large enough.  From Corollary \ref{kemp-cor} one has
$$ \mu_G(A) + \mu_G(B_1) + o(1) \geq \mu_G( A +_\delta B_1 ) \geq \mu_G( A +_{1/m} B_1 ) \geq \mu_G(A) + \mu_G(B_1) - O(1/\sqrt{m}),$$
where we adopt the convention in this proof that implied constants in the $O()$ and $\ll$ asymptotic notation are independent of $m$.  Hence we have
\begin{equation}\label{aim-o}
 \mu_G( (A +_\delta B_1) \backslash (A +_{1/m} B_1) ) \ll 1/\sqrt{m}
\end{equation}
for $n$ large enough.

Next, we claim there exists a finite set $X_m \subset B_1$ of cardinality at most $m^2$, such that
\begin{equation}\label{aim}
 \mu_G( (A + X_m) \Delta (A +_\delta B_1) ) \ll 1/\sqrt{m}
\end{equation}
for all sufficiently large $n$.
To establish this claim we use the probabilistic method.  Let $x_1,\dots,x_{m^2}$ be chosen independently and uniformly from $B_1$ (using the probability measure $\frac{1}{\mu_G(B_1)} \mu\downharpoonright_{B_1}$ formed by restricting $\frac{1}{\mu_G(B_1)} \mu$ to $B_1$).  Form the random set $X_m \coloneqq \{x_1,\dots,x_{m^2}\}$.  For any $x \in G$, we see that $x \in A+X_m$ precisely when at least one of $x_1,\dots,x_{m^2}$ lie in $x-A$.  By construction, this occurs with probability
$$ 1 - (1-\mu_G( (x-A) \cap B_1 ) / \mu_G(B_1))^{m^2} = 1 - (1-1_A * 1_{B_1}(x) / \mu_G(B_1))^{m^2}.$$
In particular, if $x \in A +_{1/m} B_1$, then $x \in A+X_m$ with probability at least $1 - (1-1/m)^{m^2} = 1 - O( \exp(-m) )$, while if $x \not \in A +_{\delta} B_1$, then $x \in A+X_m$ with probability $o(1)$.  By linearity of expectation (or Fubini's theorem), we conclude that the expected measure of $(A +_{1/m} B_1) \backslash (A+X)$ is $O(\exp(-m))$, while the expected measure of $(A+X_m) \backslash (A +_{\delta} B_1)$ is $o(1)$.  By Markov's inequality, we conclude that there exists a deterministic choice of $X_m$ such that 
$$ \mu_G((A +_{1/m} B_1) \backslash (A+X_m)) \ll \exp(-m)$$
and
$$ \mu_G((A+X_m) \backslash (A +_{\delta} B_1)) \ll o(1)$$
and the claim \eqref{aim} follows from \eqref{aim-o}.

From \eqref{aim}, \eqref{mad} we see in particular that
$$ 
 \mu_G( A + X_m ) \leq \mu_G(A) + \mu_G(B_1) + O(1/\sqrt{m}) $$
for $n$ large enough, and hence by \eqref{mut} we have
$$ 2\mu_G(A) - \mu_G(A + X_m) \gg 1$$
for $n$ large enough.  In particular, we see that for any $x,x' \in X_m$, we have
$$ \mu_G( (A+x) \cap (A+x') ) \gg 1.$$
A similar argument also gives
$$ 1 - \mu_G(A+X_m) - \mu_G(B_2) \gg 1.$$

By translation invariance, $(A+x, B_2)$ is a critical pair for each $x \in X_m$.  Applying Lemma \ref{submod-lemma} at most $m^2$ times and using the above estimates to verify the hypotheses of that lemma, we conclude that
$(A+X_m, B_2)$ is also a critical pair.  

The set $A+X_m$ is not quite asymptotically equivalent to $A +_\delta B_1$; but by \eqref{aim} and a diagonalisation argument we see that $A + X_{m_n}$ is asymptotically equivalent to $A +_\delta B_1$ if $m_n$ goes to infinity sufficiently slowly as $n \to \infty$.   As each $(A + X_m, B_2)$ is a critical pair, $(A + X_{m_n}, B_2)$ will also be a critical pair for $m_n$ going to infinity sufficiently slowly.  Applying Lemma \ref{crit-equiv}, we conclude that $(A +_\delta B_1, B_2)$ is a critical pair, giving the second of the three claims of the proposition.

Write $C \coloneqq A +_\delta B_1$, thus (as $(A,B_1)$ is a critical pair)
\begin{equation}\label{cb1}
\mu_G(C) = \mu_G(A) + \mu_G(B_1) + o(1).
\end{equation}
As $(C,B_2)$ is a critical pair, there exists an infinitesimal $\delta'>0$ such that
\begin{equation}\label{cb2}
 \mu_G( C +_{\delta'} B_2 ) \leq \mu_G(C) + \mu_G(B_2) + o(1).
\end{equation}
Set
$$ \sigma \coloneqq (\delta + \delta')^{1/3},$$
thus $\sigma>0$ is infinitesimal, and write $D \coloneqq B_1 +_\sigma B_2$.  We now consider the expression
\begin{equation}\label{1bd}
 \int_{G \backslash (C +_{\delta'} B_2)} 1_A * 1_D\ d\mu_G.
\end{equation}
By definition of $D$, we have the pointwise estimate
$$ 1_D \leq \frac{1}{\sigma} 1_{B_1} * 1_{B_2}$$
and hence we can bound \eqref{1bd} by
$$
\frac{1}{\sigma} \int_{G \backslash (C +_{\delta'} B_2)} 1_A * 1_{B_1} * 1_{B_2}\ d\mu_G$$
(here we implicitly use the fact that convolution is associative).  On the other hand, by definition of $C$ we have the pointwise estimate
$$ 1_A * 1_{B_1} \leq \delta + 1_C $$
and hence we can bound \eqref{1bd} by
$$ \frac{\delta}{\sigma} + \frac{1}{\sigma} \int_{G \backslash (C +_{\delta'} B_2)} 1_C * 1_{B_2}\ d\mu_G.$$
Since $1_C * 1_{B_2}$ is bounded by $\delta'$ outside of $C +_{\delta'} B_2$, we conclude that
$$
 \int_{G \backslash (C +_{\delta'} B_2)} 1_A * 1_D\ d\mu_G \leq \frac{\delta + \delta'}{\sigma} = \sigma^2.$$
By Markov's inequality, we conclude that
$$ \mu_G( (A +_\sigma D) \backslash (C +_{\delta'} B_2) ) \leq \sigma = o(1)$$
and hence by \eqref{cb1}, \eqref{cb2} one has
$$ \mu_G( A +_\sigma D ) \leq \mu_G(A) + \mu_G(B_1) + \mu_G(B_2) + o(1).$$
On the other hand, from two applications of Corollary \ref{kemp-cor} (and \eqref{mut}) one has
$$ \mu_G(D) \geq \mu_G(B_1) + \mu_G(B_2) - o(1)$$
and
$$ \mu_G(A +_\sigma D ) \geq \min( \mu_G(A) + \mu_G(D), 1 ) - o(1).$$
By \eqref{mut}, these bounds can only be consistent if
$$ \mu_G(D) = \mu_G(B_1) + \mu_G(B_2) + o(1)$$
and
$$ \mu_G(A +_\sigma D) = \mu_G(A) + \mu_G(D) + o(1)$$
so that $(B_1,B_2)$ and $(A,D)$ are both critical pairs, giving the final two claims of the proposition.
\end{proof}

\begin{remark} It is important in the above argument that we work with the almost sumset $A +_{o(1)} B_1$ rather than $A + B_1$, as we do not know how to approximate the latter set by sumsets $A+X$ of $A$ with a finite set $X$.  As a consequence, even if one is only interested in Theorem \ref{inv-1}, the proof methods of this paper only seem to work if one first proves the stronger claim in Theorem \ref{inv-2}.
\end{remark}

Now we can finish the proof of Theorem \ref{inv-4}.  Let $K$ and $(A,C)$ be as in the statement of that theorem.
From \eqref{mu}, \eqref{mu2}, Proposition \ref{muto}, we see that $(C,C)$ is a critical pair, and there exists a set $C_2 = C +_{o(1)} C$ of measure $\mu_G(C_2) = 2\mu_G(C)+o(1)$ such that $(A,C_2)$ is a critical pair.  By further iteration of Proposition \ref{muto} using \eqref{mu}, \eqref{mu2}, we in fact can find a set $C_k$ of measure 
\begin{equation}\label{muck}
\mu_G(C_k) = k \mu_G(C) + o(1)
\end{equation}
for each even number $k=2,4,\dots,K-2$ such that $(A,C_k)$ is a critical pair, and for each even $k=2,\dots,K-4$, $(C_2,C_k)$ is a critical pair with
\begin{equation}\label{ck2}
 C_{k+2} = C_2 +_{o(1)} C_{k}.
\end{equation}

We now use the linear growth \eqref{muck} to approximate $C$ by a Bohr set, using an argument of Schoen \cite{schoen} (later employed by Green and Ruzsa \cite{rect}, \cite{green}) to locate the relevant character $\phi$.  The character $\chi$ that this argument produces may not necessarily be the one used to construct the Bohr set, but it turns out that it is closely related to that character (one may have to divide the initial character by a bounded natural number).

From \eqref{ck2} we see that $1_{C_2} * 1_{C_k}$ is bounded pointwise by $1_{C_{k+2}} + o(1)$ for every even $k=2,\dots,K-4$.
By induction we then see that for every $k=1,\dots,\frac{K}{2}-2$, the $k$-fold convolution
$$ 1_{C_2}^{*k} = 1_{C_2} * \dots * 1_{C_2} $$
is bounded pointwise by $1_{C_{2k}} + o(1)$.  In particular, by Fubini's theorem we have
$$ \int_{C_{2k}} 1_{C_2}^{*k}\ d\mu_G \geq \mu_G(C_2)^k - o(1);$$
from \eqref{muck} and Cauchy-Schwarz, we conclude that
$$ \int_G (1_{C_2}^{*k})^2\ d\mu_G \geq \frac{1}{2k} \mu_G(C_2)^{2k-1} - o(1).$$
On the other hand, by Plancherel's theorem we may write
$$ \int_G (1_{C_2}^{*k})^2\ d\mu_G = \sum_{\phi \in \hat G} |\hat 1_{C_2}(\phi)|^{2k}$$
where (as in Section \ref{cor-sec}) the Pontryagin dual $\hat G$ is the collection of all continuous homomorphisms (characters) $\phi: G \to \R/\Z$, and $\hat 1_{C_2}(\phi)$ are the Fourier coefficients
$$ \hat 1_{C_2}(\phi) \coloneqq \int_G 1_{C_2}(x) e^{-2\pi i \phi(x)}\ d\mu_G(x).$$
The contribution of the trivial homomorphism $0$ to the above sum is $\mu_G(C_2)^{2k}$, which will be smaller than half the main term if $k \leq K/8$, thanks to \eqref{mu}.  We conclude that
$$ \sum_{\phi \in \hat G: \phi \neq 0} |\hat 1_{C_2}(\phi)|^{2k} \geq \frac{1}{4k} \mu_G(C_2)^{2k-1} - o(1)$$
for $k \leq K/8$ and $n$ large enough.  On the other hand, from Plancherel's theorem we have
$$ \sum_{\phi \in \hat G} |\hat 1_{C_2}(\phi)|^{2} = \mu_G(C_2).$$
We conclude that there exists a non-zero continuous homomorphism $\phi: G \to \R/\Z$ such that
$$ |\hat 1_{C_2}(\phi)| \geq \frac{1}{(4k)^{\frac{1}{2k-2}}} \mu_G(C_2) - o(1).$$
Applying this with $k=\left \lfloor \frac{K}{8} \right \rfloor$, we conclude in particular that
\begin{equation}\label{hac}
 |\hat 1_{C_2}(\phi)| \geq \left(1 - O\left(\frac{\log K}{K}\right)\right) \mu_G(C_2) - o(1),
\end{equation}
where we adopt the convention that implied constants in the $O()$ notation are independent of $K$.
The image $\phi(G)$ of $G$ is a non-trivial connected subgroup of $\R/\Z$, and thus must be all of $\R/\Z$; thus $\phi$ is surjective.

\begin{remark}\label{remo}  A good example to keep in mind here is if $G = \R/\Z$, $\phi:\R/\Z \to \R/\Z$ is a character $\phi(x) \coloneqq mx$ for some natural number $m \ll 1$, $C = [0,c] \text{ mod } \Z$, and $C_k = [0, kc] \text{ mod } \Z$ for $k=1,\dots,K$ and some small $c\gg 1$ (in particular $c < \frac{1}{10Km}$, say).  In this case we of course have $\mu_G(C) = c$.  Note that while $C$ is a Bohr set, the relevant character here is not $\phi$, but rather the quotient $\frac{1}{m} \phi: x \mapsto x$ of $\phi$ by $m$.  As such, we will need to perform such a quotienting step later in the argument.
\end{remark}

Since $C_2 = C +_{o(1)} C$, we have
$$ \int_{G \backslash C_2} 1_C * 1_C\ d\mu_G = o(1).$$
By Fubini's theorem, the left-hand side may be rewritten as
$$ \int_C \mu_G( (x+C) \backslash C_2)\ d\mu_G(x)$$
and hence by Markov's inequality, there exists a subset $C'$ of $C$ asymptotically equivalent to $C$ such that
\begin{equation}\label{mucc}
 \mu_G( (x+C) \backslash C_2) = o(1) 
\end{equation}
for all $x \in C'$.

From \eqref{hac}, there exists $\theta \in \R/\Z$ such that 
$$ \mathrm{Re} e^{2\pi i\theta} \hat 1_{C_2}(\phi) \geq \left(1 - O\left(\frac{\log K}{K}\right)\right) \mu_G(C_2) - o(1)$$
which we rearrange as
$$ \int_{C_2} \left(1 - \cos(2\pi (\theta - \phi(y)))\right)\ d\mu_G(y) \ll \frac{\log K}{K} \mu_G(C_2) + o(1).$$
From \eqref{mucc} and \eqref{muck}, we conclude in particular that for every $x \in C'$, one has
$$ \int_{x+C} \left(1 - \cos(2\pi (\theta - \phi(y)))\right)\ d\mu_G(y) \ll \frac{\log K}{K} \mu_G(C) + o(1)$$
and hence by change of variables
$$ \int_{C} \left(1 - \cos(2\pi (\theta - \phi(x) - \phi(y)))\right)\ d\mu_G(y) \ll \frac{\log K}{K} \mu_G(C) + o(1),$$
which by Cauchy-Schwarz implies that
$$ \int_{C} \left(1 - \cos(2\pi (\theta - \phi(x) - \phi(y)))\right)^{1/2}\ d\mu_G(y) \leq \left(\frac{\log K}{K}\right)^{1/2} \mu_G(C) + o(1);$$
noting the trigonometric identity
$$ |1 - e^{i \alpha}| = \sqrt{2(1-\cos(\alpha))}$$
we conclude that
$$ \int_{C} \left|1 - e^{2\pi i(\theta - \phi(x) - \phi(y))}\right|\ d\mu_G(y) \ll \left(\frac{\log K}{K}\right)^{1/2}  \mu_G(C) + o(1).$$
From the triangle inequality, we conclude that for any $x,x' \in C$, one has
$$ \int_{C} \left|e^{2\pi i(\theta - \phi(x') - \phi(y))} - e^{2\pi i(\theta - \phi(x) - \phi(y))}\right|\ d\mu_G(y) \ll \left(\frac{\log K}{K}\right)^{1/2} \mu_G(C) + o(1).$$
But the left-hand side simplifies to $2\mu_G(C) |\sin(\pi(\phi(x) - \phi(x')))|$, thus
$$ |\sin(\pi(\phi(x) - \phi(x')))| \ll \left(\frac{\log K}{K}\right)^{1/2} + o(1)$$
for all $x,x' \in C'$.  Thus, if $\| \alpha \|_{\R/\Z}$ denotes the distance of $\alpha$ to the nearest integer, with the associated metric $d_{\R/\Z}(\alpha,\beta) \coloneqq \| \alpha - \beta \|_{\R/\Z}$ on $\R/\Z$, then $\phi(C')$ has diameter $O( (\log K/K)^{1/2} )$ with respect to this metric.  For $K$ large enough (in fact one can check that $K = 10^4$ would suffice), we conclude that there exists $\alpha_0 \in \R/\Z$ such that
$$ \| \phi(x) - \alpha_0 \|_{\R/\Z} < \frac{1}{10}$$
for all $x \in C'$.

Note that we have the freedom to translate $C$ (and $C'$) by an arbitrary shift $x$ in $G$ (shifting $C_{2k}$ by $2kx$ accordingly) without affecting any of the above properties. From this and the surjectivity of $\phi$, we may assume without loss of generality that $\alpha_0=0$, thus
\begin{equation}\label{st}
 \| \phi(x) \|_{\R/\Z} < \frac{1}{10}
\end{equation}
for all $x \in C'$.

Recall the pushforward map $\phi_*: L^2( G ) \mapsto L^2(\R/\Z)$ from the proof of Proposition \ref{p1}.  If we write
$$ f_{C'} \coloneqq \phi_*( 1_{C'} )$$
and 
$$ f_{C_2} \coloneqq \phi_*( 1_{C_2} )$$
then $f_{C'}, f_{C_2}$ are (up to almost everywhere equivalence) functions on $\R/\Z$ taking values in $[0,1]$, with
\begin{equation}\label{rz}
 \int_{\R/\Z} f_{C'}\ d\mu_{\R/\Z} = \mu_G(C) + o(1) 
\end{equation}
and similarly
\begin{equation}\label{rz2}
 \int_{\R/\Z} f_{C_2}\ d\mu_{\R/\Z} = \mu_G(C_2) = 2 \mu_G(C) + o(1).
\end{equation}
From \eqref{st} one has that $f_{C'}$ is supported in the arc $[-\frac{1}{10},\frac{1}{10}] \text{ mod } \Z$.  Let $\tau \coloneqq \|f_{C'}\|_\infty$ denote the essential supremum of $f_{C'}$; since $\mu_G(C) \gg 1$, we have $1 \ll \tau \leq 1$.

\begin{remark}  Continuing the example in Remark \ref{remo}, taking $C' = C$, we would have $f_{C'} = \frac{1}{m} 1_{[0,mc] \text{ mod } \Z}$ and 
$f_{2C} = \frac{1}{m} 1_{[0,2mc] \text{ mod } \Z}$.
\end{remark}

If $x \in C'$, then by \eqref{mucc} $1_{x+C}$, and hence $1_{x+C'}$, is bounded by $1_{C_2}$ plus a function of $L^1(G,\mu_G)$ norm $o(1)$.  Applying $\phi_*$, we conclude that the translate $f_{C'}(\cdot-\phi(x))$ is bounded by $f_{2C} \coloneqq \phi_*(1_{2C})$ plus a function of $L^1(\R/\Z,\mu_{\R/\Z})$ norm $o(1)$. Applying Markov's inequality, we conclude that
for any $t \gg 1$, the set $\phi(x) + \{ f_{C'} \geq t \}$ is contained in the union of $\{ f_{2C} \geq t-o(1) \}$ and a set of measure $o(1)$.  Thus
$$ \int_{f_{2C} \leq t - o(1)} 1_{\{ f_{C'} \geq t \}}( \alpha - \phi(x) )\ d\mu_{\R/\Z}(\alpha) = o(1)$$
for all $x \in C'$.  Integrating over $x$, we conclude that
$$ \int_{\R/\Z} \int_{f_{2C} \leq t - o(1)} 1_{\{ f_{C'} \geq t \}}( \alpha - \beta )\ d\mu_{\R/\Z}(\alpha) f_{C'}(\beta)\ d\mu_{\R/\Z}(\beta) = o(1)$$
or equivalently that
$$ \int_{f_{2C} \leq t - o(1)} 1_{\{ f_{C'} \geq t \}} * f_{C'}\ d\mu_{\R/\Z} = o(1).$$
In particular, for any fixed $s>0$, one has
\begin{equation}\label{op}
 \int_{f_{2C} \leq t - o(1)} 1_{\{ f_{C'} \geq t \}} * 1_{\{ f_{C'} \geq s \}}\ d\mu_{\R/\Z} = o(1).
\end{equation}
Comparing this with Corollary \ref{kemp-cor}, and recalling that $\{ f_{C'} \geq s \}$ and $\{ f_{C'} \geq t \}$ are both contained in $[-\frac{1}{10},\frac{1}{10}] \text{ mod } \Z$ and thus have measure at most $1/5$, we conclude that
$$ \mu_{\R/\Z}( \{ f_{2C} \geq t-o(1) \} ) \geq \mu_{\R/\Z}( \{ f_{C'} \geq t \} ) + \mu_{\R/\Z}( \{f_{C'} \geq s \} ) - o(1)$$
whenever $t,s < \tau$ (so that the sets on the right-hand side are non-empty\footnote{A previous version of this paper neglected to address this rather important issue that Corollary \ref{kemp-cor} breaks down when one of the sets involved is empty. We regret this oversight.ix }).
Integrating over $t$, we conclude that
$$ \int_{\R/\Z} f_{2C}\ d\mu_{\R/\Z} \geq \int_{\R/\Z} f_{C'}\ d\mu_{\R/\Z} + \tau \mu_{\R/\Z}( \{f_{C'} \geq s \} ) - o(1)$$
for any $1 \ll s < \tau$,
and hence by \eqref{rz}, \eqref{rz2} we conclude that
$$ \tau \mu_{\R/\Z}( \{f_{C'} \geq s \} ) \leq \mu_G(C) + o(1)$$
for every fixed $0 < s \ll \tau$.  Diagonalising, we conclude that there exists an infinitesimal $\eps>0$ such that
$$ \tau \mu_{\R/\Z}( \{f_{C'} \geq \eps \} ) \leq \mu_G(C) + o(1).$$
Write 
\begin{equation}\label{sfc-0}
S \coloneqq \{ f_{C'} \geq \eps \}.
\end{equation}
Then from \eqref{rz} we have
$$ \mu_G(C) + o(1) = \int_{\R/\Z} f_{C'}\ d\mu_{\R/\Z} \leq \int_S f_{C'}\ d\mu_{\R/\Z} + o(1) \leq \tau \mu_{\R/\Z}(S) + o(1) \leq \mu_G(C) + o(1)$$
and thus
$$ \int_S f_{C'}\ d\mu_{\R/\Z} = \tau \mu_{\R/\Z}(S) + o(1) = \mu_G(C) + o(1)$$ 
so in particular
\begin{equation}\label{Smes}
\mu_{\R/\Z}(S)  = \tau^{-1} \mu_G(C) + o(1)
\end{equation}
and
\begin{equation}\label{sfc}
 \int_S (\tau-f_{C'})\ d\mu_{\R/\Z} = o(1).
\end{equation}
Hence by Markov's inequality, one has
$$ \mu_{\R/\Z}( \{ f_{C'} \geq t \} ) = \mu_{\R/\Z}(S) + o(1) $$
whenever $t, \tau-t \gg 1$. Using \eqref{op}, we conclude that
$$ \int_{f_{2C} \leq t} 1_S * 1_S\ d\mu_{\R/\Z} = o(1)$$
whenever $t, \tau-t \gg 1$, and hence by diagonalising there exists an infinitesimal $\eps' > 0$ such that
$$ \int_{f_{2C} \leq \tau-\eps'} 1_S * 1_S\ d\mu_{\R/\Z} = o(1).$$
Let $S_2 \coloneqq \{ f_{2C} > \tau-\eps'\} \cap ([-\frac{1}{5},\frac{1}{5}] \text{ mod } \Z)$, then since $1_S * 1_S$ is supported in $[-\frac{1}{5},\frac{1}{5}] \text{ mod } \Z$, one has
\begin{equation}\label{rss}
 \int_{\R/\Z \backslash S_2} 1_S * 1_S\ d\mu_{\R/\Z}= o(1),
\end{equation}
while from \eqref{rz2}, \eqref{Smes}, and Markov's inequality one has
$$ \mu_{\R/\Z}(S_2) \leq \tau^{-1} \int_{\R/\Z} f_{C_2}\ d\mu_{\R/\Z} + o(1) \leq 2 \mu_{\R/\Z}(S) + o(1).$$
Our strategy is to first work on the structure of $f_{C'}$ and $f_{C_2}$ (in particular, to show that these functions are basically indicator functions of arcs multiplied by $\tau$), and then return to the structural classification of $C'$ once this is done.

\begin{remark}\label{remo-2}  Again continuing Remark \ref{remo}, we would essentially have $\tau = m^{-1}$, $S = [0,mc] \text{ mod } 1$, and 
$S_2 = [0,2mc] \text{ mod } 1$.
\end{remark}

If we let $\tilde S \subset [-\frac{1}{10},\frac{1}{10}]$ and $\tilde S_2 \subset [-\frac{1}{5},\frac{1}{5}]$ be the lifts of $S, S_2$ respectively from $\R/\Z$ to $\R$, then we have
\begin{equation}\label{roll}
 \int_{\R \backslash \tilde S_2} 1_{\tilde S} * 1_{\tilde S}\ dm = o(1)
\end{equation}
and
\begin{equation}\label{mt2}
 m(\tilde S_2) \leq 2 m(\tilde S) + o(1),
\end{equation}
where $m$ denotes Lebesgue measure on $\R$.
Also $m(\tilde S) = \mu_{\R/\Z}(S) = \tau^{-1} \mu_G(C) + o(1) \gg 1$.  This type of situation (a near-saturation of the Riesz-Sobolev inequality) was studied by Christ \cite{christ}, \cite{christ2}.  We were not able to directly apply the results from those papers, as this is an endpoint case (the parameter $\eta$ in those papers would be set to $o(1)$ here).  However, we can use the following variant of the arguments in those papers.  The left-hand side of \eqref{roll} can be rearranged as
$$ \int_{\tilde S} m( (x + \tilde S) \backslash \tilde S_2 )\ dm(x)$$
so by Markov's inequality, one can find a subset $\tilde S'$ of $\tilde S$ with $m(\tilde S') = m(\tilde S) - o(1)$ such that
\begin{equation}\label{aod}
 m( (x + \tilde S) \backslash \tilde S_2 ) = o(1)
\end{equation}
for all $x \in \tilde S'$.  By inner regularity we may also take $\tilde S'$ to be compact.

Let $0 < \sigma \leq 1/4$ be a small fixed parameter.  As the primitive $x \mapsto \int_{-\infty}^x 1_{\tilde S'}\ dm$ is continuous and non-decreasing, and constant outside of $\tilde S'$, we can find real numbers $-\frac{1}{10} \leq a < b \leq \frac{1}{10}$ in $\tilde S'$ such that
$$ m( (-\infty,a) \cap \tilde S' ) = m( (b, +\infty) \cap \tilde S' ) = \sigma m(\tilde S).$$
Thus, if one defines $\tilde S_* := [a,b] \cap \tilde S$, then
$$ m(\tilde S_*) = (1-2\sigma) m(\tilde S) + o(1)$$
which in particular forces $b-a \gg 1$, where we adopt the convention that implied constants in the asymptotic notation are independent of $\sigma$.
From \eqref{aod} we have
$$ m( (a + \tilde S_*) \backslash \tilde S_2 ), m( (b + \tilde S_*) \backslash \tilde S_2 ) = o(1)$$
and hence all but $o(1)$ in measure of the set $\{a,b\} + \tilde S_*$ is contained in $\tilde S_2$.  But the sets $a + \tilde S_*$ and $b + \tilde S_*$ are essentially disjoint, thus $\{a,b\} + \tilde S_*$ has measure $(2-4\sigma) m(\tilde S) + o(1)$.  From \eqref{mt2} we conclude that all but $4 \sigma m(\tilde S) + o(1)$ in measure of $\tilde S_2$ is contained in $\{a,b\} + \tilde S_*$.  Since $1_{\tilde S} * 1_{\tilde S}$ is bounded pointwise by $m(\tilde S)$, we conclude from \eqref{roll} that
$$ \int_{\R \backslash (\{a,b\} + \tilde S_*)} 1_{\tilde S} * 1_{\tilde S} \leq 4 \sigma m(\tilde S)^2 + o(1)$$
and in particular
$$ \int_{\R \backslash (\{a,b\} + \tilde S_*)} 1_{\tilde S_*} * 1_{\tilde S_*} \ll \sigma m(\tilde S_*)^2 + o(1).$$
We now project $\R$ to the circle $T \coloneqq \R / (b-a) \Z$, and let $S_*$ be the projection of $\tilde S_* \subset [a,b]$ to that circle.  Then $\mu_{T}(S_*) = \frac{1}{b-a} m(\tilde S_*)$, while $1_{\tilde S_*} * 1_{\tilde S_*}$ is supported on $[2a,2b] = \{a,b\} + [a,b]$.  As $[a,b]$ is essentially a fundamental domain for $T$, we conclude that
$$ \int_{T \backslash S_*} 1_{S_*} * 1_{S_*}\ d\mu_{T} \ll \sigma \mu_{T}(S_*)^2 + o(1)$$
(note that the normalising factors of $\frac{1}{b-a}$ on both sides cancel each other out).
Since
$$ \int_{S_*} \min( 1_{S_*} * 1_{S_*}, \sqrt{\sigma} \mu_{T}(S_*) ) \ d\mu_{T} \leq \sqrt{\sigma} \mu_{T}(S_*)^2,$$
we conclude that
$$ \int_{T} \min( 1_{S_*} * 1_{S_*}, \sqrt{\sigma} \mu_{T}(S_*) ) \ d\mu_{T} \leq (\sqrt{\sigma} + O(\sigma) ) \mu_{T}(S_*)^2 + o(1).$$
On the other hand, from Theorem \ref{ruzsa-thm} one has
$$ \int_{T} \min( 1_{S_*} * 1_{S_*}, \sqrt{\sigma} \mu_{T}(S_*) ) \ d\mu_{T} \geq 
\sqrt{\sigma} \mu_{T}(S_*) \min( 2 \mu_{T}(S_*) - \sqrt{\sigma}, 1 ) $$
and hence
$$ \min( 2 \mu_{T}(S_*) - \sqrt{\sigma}, 1 )  \leq \mu_{T}(S_*) + O(\sqrt{\sigma} ) + o(1).$$
Since $\mu_{T}(S_*) \gg 1$, we conclude on taking $\sigma$ small enough that
$$ \mu_{T}(S_*) \geq 1 - O(\sqrt{\sigma}) - o(1)$$
and thus $\tilde S_*$ occupies all but $O(\sqrt{\sigma}) + o(1)$ of the interval $[a,b]$ in measure.  Since $\tilde S_*$ occupies all but $O(\sigma) + o(1)$ in measure of $\tilde S$, we conclude that
$$ m( \tilde S \Delta [a,b] ) \ll \sqrt{\sigma} + o(1)$$
and hence 
$$ \mu_{\R/\Z}( S \Delta ([a,b] \text{ mod } \Z) ) \ll \sqrt{\sigma} + o(1).$$
By sending $\sigma$ sufficiently slowly to zero, rather than being fixed, we have thus located a compact arc $I = [a,b] \text{ mod } \Z$ with $-\frac{1}{10} \leq a < b \leq \frac{1}{10}$ such that
$$ \mu_{\R/\Z}(S \Delta I ) = o(1).$$
From \eqref{sfc-0}, \eqref{sfc} one has
$$ \int_{\R/\Z} |f_{C'} - \tau 1_S|\ d\mu_{\R/\Z} = o(1)$$
and hence by the triangle inequality
\begin{equation}\label{fci}
 \int_{\R/\Z} |f_{C'} - \tau 1_{I}|\ d\mu_{\R/\Z} = o(1).
\end{equation}
From \eqref{rz} we now have
\begin{equation}\label{mugc-new}
 \mu_G(C) = \tau (b-a) + o(1)
\end{equation}
so in particular $b-a \gg 1$.
Also, from \eqref{rss} we now see that
$$ \mu_{\R/\Z}(S_2 \Delta I ) = o(1),$$
where $2I \coloneqq [2a, 2b] \text{ mod } \Z$, and hence by \eqref{rz2} and the definition of $S_2$ we have
\begin{equation}\label{fci-2}
 \int_{\R/\Z} |f_{C_2} - \tau 1_{2I}|\ d\mu_{\R/\Z} = o(1).
\end{equation}
To summarise so far, we have obtained a satisfactory description of the functions $f_{C'}, f_{C_2}$, namely that they are equal to $\tau 1_I$ and $\tau 1_{2I}$ respectively up to negligible errors.  If $\tau=1$ we would now be quickly done, as we could then show that $C'$ is asymptotically equivalent to the Bohr set $\phi^{-1}(I)$, which would then imply the same statement for $C$, as required for Theorem \ref{inv-4}.  Unfortunately, as Remark \ref{remo-2} shows, $\tau$ can be less than $1$, and we will need to ``quotient'' the character $\phi$ by a natural number $m$ (which will turn out to be very close to $\tau^{-1}$) to deal with this issue.  

We turn to the details.  Let $C'' \coloneqq C' \cap \phi^{-1}(I)$.  From \eqref{fci} we have
$$ \mu_G(C' \backslash C'') = \int_{\R/\Z \backslash I} f_{C'}\ d\mu_{\R/\Z} = o(1)$$
and so $C''$ is asymptotically equivalent to $C'$ and hence to $C$.  
As $C''$ is contained in $\phi^{-1}(I)$, the difference set $C''-C''$ is contained in $\phi^{-1}(I-I) = \{ x \in G: \| \phi(x) \|_{\R/\Z} \leq b-a\}$.  Crucially, we have the following lower bound:

\begin{lemma}\label{lodo}  For every $x \in C'' - C''$, we have
$$ 1_{C''} * 1_{-C''}(x) \geq \tau (b-a-\|\phi(x)\|_{\R/\Z}) - o(1).$$ 
\end{lemma}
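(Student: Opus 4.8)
The plan is to fix $x \in C'' - C''$, pick any representation $x = c_1 - c_2$ with $c_1, c_2 \in C''$, and prove the bound in the equivalent form
$$ \mu_G\big( (c_1 + C'') \cap (c_2 + C'') \big) \geq \tau\,(b - a - \|\phi(x)\|_{\R/\Z}) - o(1), $$
which suffices since $1_{C''} * 1_{-C''}(x) = \mu_G(C'' \cap (x + C''))$ and translating the latter set by $c_2$ turns it into $(c_1 + C'') \cap (c_2 + C'')$. Since $c_1, c_2 \in C'' \subseteq \phi^{-1}(I)$, the images $\phi(c_1), \phi(c_2)$ lie in $I = [a,b] \bmod \Z$, so (using $-\tfrac1{10} \le a < b \le \tfrac1{10}$, which makes all the relevant arcs lift faithfully to $\R$) the arc $L \coloneqq (\phi(c_1) + I) \cap (\phi(c_2) + I)$ satisfies $L \subseteq \phi(c_i) + I$ for $i=1,2$, $L \subseteq 2I$, and $\mu_{\R/\Z}(L) = b - a - \|\phi(x)\|_{\R/\Z}$; note this quantity is nonnegative, since $x \in C''-C''$ forces $\phi(x) \in I - I$. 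The whole argument will be carried out inside the preimage $\phi^{-1}(L)$.

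The key point is that the three sets $c_1 + C''$, $c_2 + C''$, $C_2$ each have $\mu_G$-measure $\tau\,\mu_{\R/\Z}(L) + o(1)$ inside $\phi^{-1}(L)$. Write $f_{C''} \coloneqq \phi_*(1_{C''})$; since $C''$ is asymptotically equivalent to $C'$ (as $\mu_G(C' \setminus C'') = o(1)$) and $\phi_*$ is an $L^1$-contraction (being monotone with $\phi_*(1)=1$), \eqref{fci} gives $\int_{\R/\Z} |f_{C''} - \tau 1_I|\,d\mu_{\R/\Z} = o(1)$. Using the adjoint property of $\phi_*$, the identity $\phi_*(1_{c_i + C''}) = f_{C''}(\,\cdot - \phi(c_i))$ (immediate from the defining property of $\phi_*$), and $L - \phi(c_i) \subseteq I$, we obtain
$$ \mu_G\big( (c_i + C'') \cap \phi^{-1}(L) \big) = \int_{L - \phi(c_i)} f_{C''}\,d\mu_{\R/\Z} = \tau\,\mu_{\R/\Z}(L) + o(1) \qquad (i=1,2). $$
Similarly $\mu_G( C_2 \cap \phi^{-1}(L) ) = \int_L f_{C_2}\,d\mu_{\R/\Z} = \tau\,\mu_{\R/\Z}(L) + o(1)$ by \eqref{fci-2} and $L \subseteq 2I$. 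Finally, since $C'' \subseteq C' \cap C$ and $c_i \in C'$, the bound \eqref{mucc} gives $\mu_G( (c_i + C'') \setminus C_2 ) \leq \mu_G( (c_i + C) \setminus C_2 ) = o(1)$, uniformly in $c_i \in C'$.

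To conclude, restrict to $\phi^{-1}(L)$ and apply the elementary inequality $\mu_G(X_1 \cap X_2) \geq \mu_G(X_1) + \mu_G(X_2) - \mu_G(Y) - \mu_G(X_1 \setminus Y) - \mu_G(X_2 \setminus Y)$ with $X_i \coloneqq (c_i + C'') \cap \phi^{-1}(L)$ and $Y \coloneqq C_2 \cap \phi^{-1}(L)$ (valid since $X_1 \cup X_2 \subseteq Y \cup (X_1 \setminus Y) \cup (X_2 \setminus Y)$): the first three terms contribute $\tau\,\mu_{\R/\Z}(L) + o(1)$ by the previous paragraph, while $X_i \setminus Y \subseteq (c_i + C'') \setminus C_2$ has measure $o(1)$, so
$$ \mu_G\big( (c_1 + C'') \cap (c_2 + C'') \big) \geq \mu_G(X_1 \cap X_2) \geq \tau\,\mu_{\R/\Z}(L) - o(1) = \tau\,(b - a - \|\phi(x)\|_{\R/\Z}) - o(1), $$
as required. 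I expect the only genuinely delicate point to be the choice of ambient set in this last step: inclusion--exclusion over all of $G$ is worthless, since $c_1 + C''$, $c_2 + C''$ and $C_2$ all have measure roughly $\mu_G(C_2)/2$ and could be nearly disjoint; the gain comes entirely from the fact that, over the arc $L$, \eqref{fci-2} forces $C_2$ to be fibrewise no larger than a single translate of $C''$, so it can serve as the ambient set. The rest is bookkeeping of the $o(1)$ terms, which one must check are uniform in the choice of $c_1, c_2$ — this holds since the exceptional sets in \eqref{mucc} can be taken uniformly over $c_i \in C'$, and the $L^1$ estimates \eqref{fci}, \eqref{fci-2} do not involve $x$.
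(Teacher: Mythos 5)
Your proof is correct and follows essentially the same route as the paper: both arguments fix a representation $x = c_1 - c_2$ (the paper writes $y-z$), restrict to the preimage of the common arc $L = (\phi(c_1)+I)\cap(\phi(c_2)+I)$ (the paper writes this arc as $[a+h+s,b+s]\bmod\Z$), use \eqref{fci} and \eqref{fci-2} to pin down the measures of $(c_i+C'')\cap\phi^{-1}(L)$ and $C_2\cap\phi^{-1}(L)$ as $\tau\mu_{\R/\Z}(L)+o(1)$, use \eqref{mucc} to show the shifted copies sit in $C_2$ up to $o(1)$, and finish by inclusion--exclusion inside $\phi^{-1}(L)$ with $C_2$ as the ambient set. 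The only differences are cosmetic (naming the arc $L$, stating the inclusion--exclusion inequality abstractly, and being slightly more explicit about replacing $f_{C'}$ by $f_{C''}$ via the $L^1$-contraction property of $\phi_*$).
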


\begin{proof}  As discussed above, $x$ lies in $\phi^{-1}(I-I)$, so $\phi(x)$ lies in the interval $[a-b, b-a] \text{ mod } \Z$.  As $1_{C''} * 1_{-C''}$ is an even function, we may assume without loss of generality that $\phi(x) = h \text{ mod } \Z$ for some $0 \leq h \leq b-a$.  By construction, we have $x = y-z$ for some $y,z \in C''$, then $\phi(z) = s \hbox{ mod } \Z$ and $\phi(y) = s+h \hbox{ mod } z$ for some $a \leq s \leq b-h$.  Since $y,z \in C'' \subset C'$, we see from \eqref{mucc} that
$$ \mu_G( (y+C'') \backslash C_2 ), \mu_G( (z+C'') \backslash C_2 ) = o(1).$$
In particular, the sets
$$ y+(C'' \cap \phi^{-1}([a,b-h]\text{ mod } \Z)), z + (C'' \cap \phi^{-1}([a+h,b] \text{ mod } \Z))$$
are both contained in the set $C_2 \cap \phi^{-1}([a+h+s,b+s]\text{ mod } \Z)$, outside of a set of measure $o(1)$.  But by \eqref{fci}, the set $y+(C'' \cap \phi^{-1}([a,b-h]\text{ mod } \Z))$ has measure
$$ \int_{[a,b-h]\text{ mod } \Z} f_{C'}\ d\mu_{\R/\Z} = \tau(b-a-h) + o(1)$$
and similarly $z + (C'' \cap \phi^{-1}([a+h,b] \text{ mod } \Z))$ also has measure $b-a-h+o(1)$.  By \eqref{fci-2}, the set $C_2 \cap \phi^{-1}([a+h+s,b+s]\text{ mod } \Z)$ has measure
$$ \int_{[a+h+s, b+s]} f_{C_2}\ d\mu_{\R/\Z} = \tau(b-a-h) + o(1).$$
By the inclusion-exclusion principle, we conclude that
$$ \mu_G( y+(C'' \cap \phi^{-1}([a,b-h]\text{ mod } \Z)), z + (C'' \cap \phi^{-1}([a+h,b] \text{ mod } \Z)) ) \geq \tau(b-a-h) - o(1).$$
Since the left-hand side is at least $1_{C''} * 1_{-C''}(y-z) = 1_{C''} * 1_{-C''}(x)$, the claim follows.
\end{proof}

As a consequence, we can now obtain a local additive closure property for $C''-C''$:

\begin{corollary}\label{raz}  There is a positive quantity $\kappa = o(1)$ with the property that whenever $x,y \in C'' - C''$ with
$$ \| \phi(x) \|_{\R/\Z} + \| \phi(y) \|_{\R/\Z} \leq b-a-\kappa$$
then $x+y \in C''-C''$.
\end{corollary}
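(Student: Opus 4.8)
The plan is to reduce the claim to a positivity statement about a single intersection. For $x,y \in C''-C''$ put $U \coloneqq C'' \cap (C''+x)$ and $V \coloneqq C'' \cap (C''-y)$; I will choose a positive $\kappa = o(1)$ for which $\mu_G(U \cap V) > 0$ whenever $\|\phi(x)\|_{\R/\Z} + \|\phi(y)\|_{\R/\Z} \le (b-a) - \kappa$. Granting this, any $z \in U \cap V$ satisfies $z - x \in C''$ and $z+y \in C''$, so that $x+y = (z+y) - (z-x)$ exhibits $x+y$ as an element of $C''-C''$, which is exactly the assertion of the corollary.

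To bound $\mu_G(U \cap V)$ from below I would assemble two inputs. First, since $1_{C''} * 1_{-C''}(t) = \mu_G(C'' \cap (C''+t))$, and since $C''-C''$ is symmetric with $\|\phi(-y)\|_{\R/\Z} = \|\phi(y)\|_{\R/\Z}$, applying Lemma \ref{lodo} at the two points $x$ and $-y$ yields
$$ \mu_G(U) \ge \tau\big( (b-a) - \|\phi(x)\|_{\R/\Z} \big) - o(1), \qquad \mu_G(V) \ge \tau\big( (b-a) - \|\phi(y)\|_{\R/\Z} \big) - o(1). $$
Here it matters that the $o(1)$ error in Lemma \ref{lodo} is uniform over $x,y \in C''-C''$; this holds because the proof of that lemma only invokes the uniform-in-$C'$ estimate \eqref{mucc} and the global $L^1$-bounds \eqref{fci}, \eqref{fci-2}. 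Second, $C''$ is asymptotically equivalent to $C$, so $\mu_G(C'') = \tau(b-a) + o(1)$ by \eqref{mugc-new}.

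Since $U$ and $V$ are both contained in $C''$, the elementary submodularity bound $\mu_G(U \cap V) \ge \mu_G(U) + \mu_G(V) - \mu_G(U \cup V) \ge \mu_G(U) + \mu_G(V) - \mu_G(C'')$ then gives
$$ \mu_G(U \cap V) \ge \tau\big( (b-a) - \|\phi(x)\|_{\R/\Z} - \|\phi(y)\|_{\R/\Z} \big) - \omega $$
for a single infinitesimal $\omega = \omega_n > 0$ (independent of $x,y$; we may take it positive by enlarging it to $\max(\omega_n,1/n)$ if needed). As $\tau \gg 1$ was already established, I would take $\kappa \coloneqq \omega^{1/2} + \tfrac1n$, which is positive and $o(1)$ and satisfies $\omega/\kappa \le \omega^{1/2} = o(1)$; then the hypothesis $\|\phi(x)\|_{\R/\Z} + \|\phi(y)\|_{\R/\Z} \le (b-a)-\kappa$ forces $\mu_G(U \cap V) \ge \tau\kappa - \omega = \kappa(\tau - \omega/\kappa) > 0$ for all sufficiently large $n$, and in particular $U \cap V \ne \emptyset$, which completes the reduction.

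I do not expect a real obstacle here: the content is the two-set (equivalently, three-translate) inclusion--exclusion fed by Lemma \ref{lodo}, and the remainder is bookkeeping. The two points needing care are (i) that the error term in Lemma \ref{lodo} is genuinely uniform in $x \in C''-C''$, so that one value of $\kappa = o(1)$ works for all admissible pairs $(x,y)$ at once, and (ii) that $\tau$ stays bounded away from $0$, so that the additive $o(1)$ loss in the submodularity estimate can be absorbed into $\kappa$; both are already guaranteed by the constructions preceding the lemma.
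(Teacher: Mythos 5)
Your proof is correct and is essentially the paper's argument: both feed the two lower bounds from Lemma \ref{lodo} at $x$ and at $y$ (equivalently $-y$) into the three-translate inclusion--exclusion bound, together with $\mu_G(C'')=\tau(b-a)+o(1)$, to conclude that a suitable intersection has positive measure once $\kappa=o(1)$ tends to zero slowly enough. The only cosmetic difference is that the paper works with the triple $C''$, $x+C''$, $x+y+C''$ and concludes $1_{C''}*1_{-C''}(x+y)>0$ directly, while you work with $C''$, $C''+x$, $C''-y$ and extract a witness $z$; your handling of the uniformity of the $o(1)$ error in Lemma \ref{lodo} and the explicit choice of $\kappa$ are welcome elaborations of points the paper leaves implicit.
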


\begin{proof}  Let $\kappa = o(1)$ be an infinitesimal to be chosen later.  From the preceding lemma we have
$$ \mu_G(C'' \cap (x+C'')) = 1_{C''} * 1_{-C''}(x) \geq \tau (b-a-\|\phi(x)\|_{\R/\Z}) - o(1)$$ 
and
$$ \mu_G((x+C'') \cap (x+y+C'')) = 1_{C''} * 1_{-C''}(y) \geq \tau (b-a-\|\phi(y)\|_{\R/\Z}) - o(1)$$ 
while from \eqref{mugc-new} we have
$$ \mu_G(C'') = \tau(b-a) + o(1).$$
From the triangle inequality, we conclude that
$$ \mu_G(C'' \cap (x+y+C'')) = \tau (b-a-\|\phi(x)\|_{\R/\Z} - \| \phi(y) \|_{\R/\Z}) - o(1).$$ 
For $\kappa$ going to zero sufficiently slowly, the right-hand side is positive, and hence $x+y \in C'' - C''$ as desired.
\end{proof}

The kernel $\phi^{-1}(0)$ of $\phi$ is a compact subgroup of $G$.  Set $H \coloneqq (C''-C'') \cap \phi^{-1}(0)$, then $H$ is compact and symmetric around the origin.  By the above corollary, it is also closed under addition; thus $H$ is a compact subgroup of $\phi^{-1}(0)$.  By a further application of the above corollary, we see that whenever $x \in C''-C''$ is such that $\|\phi(x)\|_{\R/\Z} \leq b-a-\kappa$, then
$$ (C''-C'') \cap (x+\phi^{-1}(0)) = x + H.$$
By yet another application of this corollary, we see that the set $E \coloneqq \phi(C''-C'') \cap ([a-b+\kappa,b-a-\kappa] \text{ mod } \Z)$ is locally closed under addition in the sense that
$$ (E + E) \cap ([a-b+\kappa,b-a-\kappa] \text{ mod } \Z) \subset E.$$
By \eqref{fci} we see that $E$ occupies all but $o(1)$ of the arc $[a-b+\kappa,b-a-\kappa] \text{ mod } \Z$; from the above inclusion and the pigeonhole principle we conclude that $E$ contains the interval $J \coloneqq [a-b+\kappa',b-a-\kappa'] \text{ mod } \Z$ for some infinitesimal $\kappa' > \kappa$.  We thus see that we have a representation of the form
$$ (C''-C'') \cap \phi^{-1}(J) = \bigcup_{s \in J} \psi(s)$$
where for each $s \in J$, $\psi(s) \in G/H$ is a coset of $H$ that lies in the coset $\phi^{-1}(s)$ of $\phi^{-1}(0)$.

Since $(C''-C'') \cap \phi^{-1}(J)$ is a compact set of positive measure in $G$, $\psi(J)$ is a compact set of positive measure in the quotient group $G/H$, which is a compact connected group.  The character $\phi: G \to \R/\Z$ descends to a character $\tilde \phi: G/H \to \R/\Z$.  The translates $\psi(J) + h$ for $h \in \tilde \phi^{-1}(0)$ are all disjoint, and hence the kernel $\tilde \phi^{-1}(0)$ must be finite since $G/H$ has finite measure.  If $m$ is the cardinality of this kernel, then $\tilde \phi$ is an $m$-fold cover of $\R/\Z$ by a compact connected group, and this cover is isomorphic to the cover of $\R/\Z$ by itself using the multiplication map $x \mapsto mx$.  In other words, we have $\tilde \phi = m \tilde \phi'$ for some bijective character $\tilde \phi': G/H \to \R/\Z$, which can be lifted back to $\phi = m \phi'$ where $\phi'$ is the lift of $\tilde \phi'$.

Consider the function $g: J \to \R/\Z$ defined by $g(s) \coloneqq (\tilde \phi')^{-1}( \psi(s) )$.  Since $\psi(J)$ is compact, $g$ is continuous; since $\psi(s)$ lies in $\phi^{-1}(s)$, we have $m g(s) = s$ for all $s \in J$.  Also, $g(0)=0$.  By monodromy, this implies that 
$$g(s \text{ mod } \Z) = \frac{s}{m} \text{ mod } \Z$$
for all $s \in [a-b+\kappa', b-a+\kappa']$.  Since
$$ (C''-C'') \cap \phi^{-1}(J) = \bigcup_{s \in J} (\phi')^{-1}(g(s))$$
we conclude that $(C''-C'') \cap \phi^{-1}(J)$ is the Bohr set
\begin{equation}\label{ccp}
 (C''-C'') \cap \phi^{-1}(J) = (\phi')^{-1} (m^{-1} J)
\end{equation}
where $m^{-1} J \coloneqq [m^{-1}(a-b+\kappa'),m^{-1}(b-a-\kappa')] \text{ mod } \Z$. 

Having controlled $C''-C''$, we now return to $C''$.  We first need to relate $m$ with $\tau$.  On the one hand, for any $x \in C''$, we have 
$$ (C'' \cap \phi^{-1}( J + \phi(x) )) - x \subset (C''-C'') \cap \phi^{-1}(J \cap ([a,b] + \phi(x))).$$
From \eqref{fci}, the left-hand side has measure $\tau(b-a) + o(1)$ (we now allow the $o(1)$ terms to depend on $\kappa'$).  From \eqref{ccp}, the right-hand side has measure $m^{-1}(b-a)+o(1)$.  We conclude that
$$ \tau \leq m^{-1} + o(1).$$
On the other hand, from Lemma \ref{lodo} and \eqref{ccp} we see that
$$ \int_G 1_{C''} * 1_{-C''}\ d\mu_G \geq \int_{(\phi')^{-1} (m^{-1} J)}\tau (b-a-\|\phi(x)\|_{\R/\Z})\  d\mu_G(x) - o(1).$$
By \eqref{mugc-new}, the left-hand side is
$$ \mu_G(C'')^2 = \tau^2 (b-a)^2 + o(1).$$
By change of variables, the right-hand side is equal to
$$ \int_{m^{-1} J} \tau(b-a-\|ms\|_{\R/\Z})\ d\mu_{\R/\Z}(s) - o(1) = \tau m^{-1} (b-a)^2 - o(1)$$
and we conclude that
$$ \tau \geq m^{-1} + o(1).$$
Thus we have $\tau = m^{-1} + o(1)$; from \eqref{mugc-new} we conclude that
\begin{equation}\label{muhaha}
 \mu_G(C'') = m^{-1}(b-a) + o(1).
\end{equation}
From \eqref{fci}, there exists $x \in C''$ such that $\phi(x) = a+o(1)$.  Since $C''-x$ is contained in $(C''-C'') \cap [a-\phi(x),b-\phi(x)]$, it lies in $(C''-C'') \cap \phi^{-1}(J)$ outside of a set of measure $o(1)$.  Applying \eqref{ccp} and translating, we conclude that outside of a set of measure $o(1)$, $C''$ lies in $(\phi')^{-1}( m^{-1} J ) + x$; it also lies in $\phi^{-1}(I)$.  Thus, outside of a set of measure $o(1)$, $C''$ lies in $(\phi')^{-1}(J')$, where
$$ J' := \{ s \in \R/\Z: s \in m^{-1} J  + \phi'(x); ms \in I \}.$$
The set $\{ s \in \R/\Z: ms \in I \}$ is the union of $m$ equally spaced arcs of length $\frac{b-a}{m}$ each, while $m^{-1} J + \phi'(x)$ is an arc of length $2\frac{b-a}{m}$.  Since $b-a \leq \frac{1}{5}$, we conclude that $J'$ is an arc of length at most $\frac{b-a}{m}$; in particular, 
$$ \mu_G( (\phi')^{-1}(J') ) = m^{-1} (b-a).$$
Comparing this with \eqref{muhaha} we conclude that $C''$ is asymptotically equivalent to the Bohr set $(\phi')^{-1}(J')$, and hence $C$ is also, giving Theorem \ref{inv-4} (and thus Theorems \ref{inv-3}, \ref{inv-2}, and \ref{inv-1}).

\section{Further remarks}

It is natural to ask whether Theorem \ref{inv-1} or Theorem \ref{inv-2} may be extended to more general groups.  John Griesmer (personal communication) has proposed the following strong conjecture:

\begin{conjecture}\label{jg} Let $G$ be a compact group (not necessarily abelian) with probability Haar measure $\mu_G$, let $\eps > 0$, and let $\delta>0$ be sufficiently small depending on $\eps$.  Then for any compact subsets $A,B \subset G$ with $\mu_G(AB) \leq \mu_G(A)+\mu_G(B)+\delta$ and $\mu_G(A) + \mu_G(B) \leq 1-\eps$, there exists compact subsets $A',B'$ of $G$ with $\mu_G(A \Delta A'), \mu_G(B \Delta B') \leq \eps$ such that $\mu_G(A'B') \leq \mu_G(A') + \mu_G(B')$.
\end{conjecture}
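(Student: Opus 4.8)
We conclude with a sketch of a possible approach to Conjecture~\ref{jg}, modelled on the proof of Theorem~\ref{inv-2}, together with an indication of where genuinely new input appears to be required. First, elementary reductions. If $\mu_G(A)\le\eps$ we may take $A'$ to be a single point and $B'=B$: then $\mu_G(A'B')=\mu_G(B')=\mu_G(A')+\mu_G(B')$ and $\mu_G(A\Delta A')=\mu_G(A)\le\eps$, and symmetrically if $\mu_G(B)\le\eps$; so we may assume $\mu_G(A),\mu_G(B)\ge\eps$. Exactly as in Section~\ref{cheap} we then pass to the cheap nonstandard framework: given a sequence $G=G_n$, $A=A_n$, $B=B_n$ with $\mu_G(A),\mu_G(B),1-\mu_G(A)-\mu_G(B)\gg1$ and $\mu_G(AB)\le\mu_G(A)+\mu_G(B)+o(1)$, we must produce compact $A',B'$ asymptotically equivalent to $A,B$ with $\mu_G(A'B')\le\mu_G(A')+\mu_G(B')$ for all large $n$; this inequality will hold because $A',B'$ will be genuine extremisers of Kneser's bound (parallel Bohr sets in the connected case). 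The notions of partial product $A\cdot_\delta B\coloneqq\{1_A*1_B\ge\delta\}$, critical pair, and almost product $A\cdot_{o(1)}B$ are defined exactly as in Section~\ref{cheap} with $+$ replaced by the group law.

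The next step is to reduce to connected $G$. Write $G^0$ for the identity component and $\pi\colon G\to Q\coloneqq G/G^0$ for the projection onto the profinite group $Q$. Disintegrating $\mu_G$ over $\mu_Q$ and applying Kneser's inequality fibrewise inside the \emph{connected} group $G^0$ yields a functional Kneser inequality on $Q$ whose near-saturation forces $\mu_Q(\pi(A)\pi(B))\le\max\bigl(\mu_Q(\pi(A)),\mu_Q(\pi(B))\bigr)+o(1)$, i.e.\ the pair $(\pi(A),\pi(B))$ has essentially no doubling in $Q$. Applying the classical Kneser theorem in $Q$ and the inverse theorems for sets of tiny doubling in finite (hence profinite) groups referenced in the Remark following Theorem~\ref{inv-1}, together with their noncommutative refinements, one may replace $\pi(A),\pi(B)$ by cosets of a fixed finite-index subgroup of $Q$ twisted by short progressions, and a bookkeeping argument then descends the problem fibre by fibre to the connected group $G^0$. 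When $Q$ is abelian this is routine; the general profinite case is more delicate and is one place where the conjecture could genuinely be hard.

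Now suppose $G$ is connected. Essentially the whole of Section~\ref{cheap} is insensitive to commutativity. Theorem~\ref{ruzsa-thm}, and hence Corollary~\ref{kemp-cor}, hold for connected nonabelian compact groups (Remark in Section~\ref{ruz}); the submodularity Lemma~\ref{submod-lemma}, its iterate Corollary~\ref{iter}, and Proposition~\ref{muto} use only bilinearity and associativity of convolution together with Lemma~\ref{cont} (whose proof needs only that $1_A*1_{A^{-1}}$ is continuous, equals $\mu_G(A)$ at the identity, and has mean $\mu_G(A)^2$); Proposition~\ref{p1} uses only that $\phi^{-1}(J)$ is bi-invariant under the normal subgroup $\ker\phi$ and that a homomorphism pushes convolutions forward to convolutions; and Lemma~\ref{lodo}, Corollary~\ref{raz}, and the descent following \eqref{ccp} go through with routine attention to left versus right. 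Thus, running the argument exactly as in the reduction to Theorem~\ref{inv-4}, one arrives at a critical pair $(A,C)$ with $\mu_G(A)\ge K\mu_G(C)$ and $\mu_G(A)+K\mu_G(C)<1$ such that $C$ has linear growth, $\mu_G(C_k)=k\mu_G(C)+o(1)$ for $k\le K$ with $C_{k+2}=C_2\cdot_{o(1)}C_k$. The \emph{one} step that does not transfer verbatim is the extraction from this linear growth of a continuous surjection $\phi\colon G\to\R/\Z$.

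This is the main obstacle. In the abelian argument one applies Plancherel on $\hat G$ to produce a nontrivial homomorphism $\phi\colon G\to\R/\Z$ with $|\hat 1_{C_2}(\phi)|$ nearly equal to $\mu_G(C_2)$. For nonabelian $G$ one should instead use the nonabelian Plancherel identity
$$ \int_G (1_{C_2}^{*k})^2\,d\mu_G = \sum_{\rho\in\hat G} d_\rho\,\bigl\| \hat 1_{C_2}(\rho)^k \bigr\|_{\mathrm{HS}}^2, $$
with $\hat G$ now the set of irreducible unitary representations, to find a nontrivial $\rho$ and a unit vector $v$ with $\|\hat 1_{C_2}(\rho)v\|\ge\bigl(1-O(\log K/K)\bigr)\mu_G(C_2)-o(1)$; unwinding, $\rho(x)v$ lies within $o(1)$ of a fixed unit vector for most $x\in C_2$, so $x\mapsto\rho(x)v$ is approximately a homomorphism on a neighbourhood of the identity in $(C'')^{-1}C''$. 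The crux is to upgrade this to the statement that $\rho$, restricted to the closed subgroup generated by $C$, acts almost scalarly, hence is one-dimensional — whence, $G$ being connected, a continuous surjection $\phi\colon G\to\R/\Z$. This requires controlling how the quality of the approximation degrades under the bounded number of multiplications of elements of $C$ permitted by the linear growth, and, crucially, ruling out the case in which $G$ admits no nontrivial circle character at all (for instance $G$ semisimple), in which case the linear-growth hypothesis must instead be shown to be self-contradictory — i.e.\ one must establish a quantitative strengthening of Kneser's inequality for such $G$. This step appears to need an idea beyond those in the present paper. Granting it, the endgame is exactly as here: with $C$ asymptotically equivalent to a Bohr set $\phi^{-1}(I_C)$, Proposition~\ref{p1} gives that $A$ is asymptotically equivalent to a parallel Bohr set $\phi^{-1}(I)$, a second application gives that $B$ is asymptotically equivalent to $\phi^{-1}(J)$, and the parallel Bohr sets $A'\coloneqq\phi^{-1}(I)$, $B'\coloneqq\phi^{-1}(J)$ satisfy $A'B'=\phi^{-1}(I+J)$, so $\mu_G(A'B')=\mu_G(A')+\mu_G(B')$, completing the proof.
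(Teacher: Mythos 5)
The statement you are asked to prove is Conjecture~\ref{jg}, which the paper does \emph{not} prove: it is attributed to John Griesmer (personal communication) and presented as an open problem, with the paper only observing that Theorem~\ref{inv-1} establishes it in the connected abelian case and that the $\Z/p\Z$ case follows from work of Grynkiewicz. There is therefore no proof in the paper for your sketch to be compared against, and your text (rightly) makes no claim to be one either.

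Assessed on its own terms, your sketch correctly identifies the main obstacles but understates the difficulty of what it calls the ``routine'' parts. First, the proposed reduction to connected $G$ via $Q = G/G^0$ is substantially harder than you suggest. If $Q$ is infinite (as for $G = \Z_p$ or $G$ an infinite product of finite groups), then $\mu_G(G^0)=0$, so the ``fibrewise Kneser inside $G^0$'' step operates on null fibres and the disintegration argument would need to be set up quite differently. More importantly, Kneser's structure theorem for a (pro)finite abelian $Q$ produces critical pairs that are periodic with respect to a nontrivial subgroup (not Bohr sets), and the conjecture requires a \emph{stability} version of that structure theorem in general finite and profinite groups; only the $\Z/p\Z$ case is known to admit such a result. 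So the abelian disconnected case is already a genuinely open problem, not a routine descent. Second, for connected nonabelian $G$, the conjectured extremisers are preimages of arcs under a character $G\to\R/\Z$, which factors through the abelianisation $G/[G,G]$; when this abelianisation is trivial (semisimple $G$), no such extremisers of intermediate measure exist, so — as you correctly observe — one must prove a quantitative strengthening of Kemperman's inequality ruling out near-critical pairs altogether. That is the kind of spectral-gap type input that lies outside the toolkit of this paper, and without it the argument terminates in the linear-growth stage with no character to extract. In short: the sketch is a plausible programme and locates the right bottlenecks, but it supplies no proof, and the ``routine'' descent to $G^0$ is not routine.
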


One could strengthen this conjecture even further by requiring that $\delta$ be independent of $G$.  One can also consider non-compact groups $G$ (in which one would remove the hypothesis $\mu_G(A) + \mu_G(B) \leq 1-\eps$), though for non-unimodular groups there may be additional technical difficulties arising from the distinction between left-invariant and right-invariant Haar measures.  The case $A=B$ would be of particular interest, as it basically is concerned with classification of sets of doubling constant slightly larger than $2$.

Note that Theorem \ref{inv-1} verifies Conjecture \ref{jg} (with $\delta$ independent of $G$) under the additional hypotheses that $G$ is connected and abelian.  The case $G = \Z/p\Z$ of a cyclic group of prime order also follows from \cite[Theorem 21.8]{g}.  This conjecture would combine well with the extensive literature \cite{kemp2}, \cite{kneser}, \cite{g}, \cite{gri}, \cite{dev}, \cite{bjork}, \cite{bjork2}, \cite{bjork3} on classifying pairs of sets $A',B'$ obeying the relation $\mu_G(A'B') \leq \mu_G(A') + \mu_G(B')$ for various types of groups $G$.

It may also be possible to obtain an inverse theorem for Theorem \ref{ruzsa-thm}, that is to say to obtain some approximate structural description of sets $A,B$ for which
$$ \int_G \min( 1_A * 1_B, t )\ d\mu_G \leq t \min( \mu_G(A) + \mu_G(B) -t, 1 ) + \eps$$
for some $t>0$ and some small $\eps>0$, assuming appropriate non-degeneracy conditions on $\mu_G(A), \mu_G(B), t$.  We do not pursue this question here.

\end{document}